\DeclareMathOperator{\dom}{dom}
\DeclareMathOperator{\cost}{Cost}
\DeclareMathOperator{\rg}{rg}
\DeclareMathOperator{\lf}{lf}
\DeclareMathOperator{\supp}{supp}
\newtheorem{lem}{Lemma}[section]
\newtheorem{thm}[lem]{Theorem}
\newtheorem{cor}[lem]{Corollary} 
\theoremstyle{definition}
\newtheorem{defi}[lem]{Definition}
\newtheorem{question}[lem]{Question}
\newtheorem{exa}[lem]{Example}
\newtheorem{rem}[lem]{Remark} 
\newcommand{\N}{\ensuremath {\mathbb{N}}}
\newcommand{\F}{\ensuremath {\mathbb{F}}}
\newcommand{\R} {\ensuremath {\mathbb{R}}}
\newcommand{\Z} {\ensuremath {\mathbb{Z}}}
\DeclareMathOperator{\rk}{rk}
 \newcommand\norm{\bBigg@{0.8}}
 \newcommand{\inparens}[2][flex]{\csname #1l\endcsname(#2%
                                 \csname #1r\endcsname)\mathclose{}}
 \newcommand{\inangles}[2][flex]{\csname #1l\endcsname\langle#2%
                                 \csname #1r\endcsname\rangle\mathclose{}} 
 \newcommand{\innorm}[2][flex]{\csname #1l\endcsname|#2%
                                 \csname #1r\endcsname|\mathclose{}}
 \newcommand{\indnorm}[2][flex]{\csname #1l\endcsname\|#2%
                                 \csname #1r\endcsname\|\mathclose{}}
 \newcommand{\indnorml}[4][flex]{\csname #1l\endcsname\|#2%
                                 \csname #1r\endcsname\|_{#3}^{#4}\mathclose{}}
\newcommand{\sv}[2][flex]{\indnorm[#1]{#2}}%
\newcommand{\isv}[2][norm]{\indnorml[#1]{#2}{\Z}{}}
\newcommand{\pfcl}[2][flex]{\csname #1l\endcsname[#2%
                            \csname #1r\endcsname]}
\newcommand{\ifsv}[2][norm]{\csname #1l\endcsname\bracevert\!#2\!%
                            \csname #1r\endcsname\bracevert}
\newcommand{\ifsvp}[3][norm]{\csname #1l\endcsname\bracevert\!#2\!%
                            \csname #1r\endcsname\bracevert\!^{#3}}
\newcommand{\stisv}[2][flex]{\indnorml[#1]{#2}{\Z}{\infty}}
\newcommand{\suv}[3][norm]{\csname #1l\endcsname\bracevert\!#2\!%
                           \csname #1r\endcsname\bracevert_{(#3)}}
\def\Clf#1#2#3{C_{#1}^{\lf}(#2;#3)}
\def\Hlf#1#2#3{H_{#1}^{\lf}(#2;#3)}
\def\fa#1{\forall_{#1}\quad}
\def\exi#1{\exists_{#1}\quad}
\newcommand{\pfc}[1]{%
  \widehat{#1}}
\newcommand{\linfz}[1]{%
  L^\infty(#1,\Z)}
\DeclareMathOperator{\map}{map}
\newcommand{\actson}{\curvearrowright}
\title{Cost vs.\ integral foliated simplicial volume}
\author{Clara L\"oh}
\subjclass[2010]{57R19, 20E18, 28D15}
\keywords{integral foliated simplicial volume, cost of groups}
\def\draftinfo{}%{\color{red}\ -- \textsf{prelim version!}}
\date{\today.\ \copyright{\ C.~L\"oh 2018}. 
    This work was supported by the CRC~1085 \emph{Higher Invariants} 
    (Universit\"at Regensburg, funded by the DFG).
    \draftinfo}
\begin{document}

\begin{abstract}
  We show that integral foliated simplicial volume of closed manifolds 
  gives an upper bound for the cost of the corresponding fundamental groups. 
\end{abstract}
\maketitle

%%%%%%%%%%%%%%%%%%%%%%%%%%%%%%%%%%%%%%%%%%%%%%%%%%%%%%%%%%%%%%%%%%
\section{Introduction}

The dynamical view on groups and spaces aims at understanding
groups and topological spaces through actions on probability spaces. 
If $\Gamma$ is a group and $\alpha = \Gamma \actson (X,\mu)$ is a
measure preserving action on a probability space~$(X,\mu)$, then one considers,
for instance, the following invariants 
(see Section~\ref{sec:cost} and~\ref{sec:ifsv} for definitions and references):
The cost~$\cost_\mu \alpha$ of~$\alpha$ is a randomised version
of the minimal number of generators of~$\Gamma$. The cost~$\cost
\Gamma$ of the group~$\Gamma$ is the infimum of all such~$\cost_\mu \alpha$.

If $M$ is an oriented closed connected manifold with fundamental
group~$\Gamma$, then the $\alpha$-parametrised simplicial
volume~$\ifsvp M \alpha$ of~$M$ is a randomised version of the
integral simplicial volume of~$M$. The integral foliated simplicial
volume~$\ifsv M$ of~$M$ is the infimum of all such~$\ifsvp M \alpha$.

In the residually finite case, the profinite completion provides a
link between the dynamical view and the residually finite view:
If $\Gamma$ is a residually finite group, then the cost of the
translation action of~$\Gamma$ on the profinite completion~$\widehat
\Gamma$ coincides with the rank gradient~$\rg \Gamma$ of~$\Gamma$
(plus~$1$)~\cite[Theorem~1]{abertnikolov} and the corresponding
parametrised simplicial volume of~$M$ coincides with the stable
integral simplicial volume~$\stisv M$~\cite[Remark~6.7]{loehpagliantini}.
Moreover, these gradient invariants are related as follows: 

\begin{thm}[rank gradient estimate~\protect{\cite[Theorem~1.1]{loehrg}}]\label{resfin}
  If $M$ is an oriented closed connected manifold with fundamental
  group~$\Gamma$, then
  \[\rg \Gamma \leq \stisv M.
  \]
\end{thm}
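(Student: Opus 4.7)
The plan is to reduce the theorem to a pointwise estimate on each finite cover and then to extract a bounded presentation of each covering group from an efficient integral fundamental cycle.

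By definition, $\rg \Gamma$ is the infimum of $(d(\Lambda) - 1)/[\Gamma : \Lambda]$ and $\stisv M$ is the infimum of $\isv{M_\Lambda}/[\Gamma : \Lambda]$, where both infima range over finite-index subgroups $\Lambda \leq \Gamma$ with associated finite cover $M_\Lambda \to M$. It therefore suffices to establish the pointwise inequality
\[
d(\Lambda) - 1 \leq \isv{M_\Lambda}
\]
for every such $\Lambda$; dividing by $[\Gamma : \Lambda]$ and taking the infimum then yields the theorem.

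To prove the pointwise statement, start from an integral fundamental cycle $c = \sum_{j=1}^k \varepsilon_j \sigma_j \in C_n(M_\Lambda; \Z)$ with $|c|_1 = k$ close to $\isv{M_\Lambda}$. Glue the $k$ singular simplices $\sigma_j$ along the codimension-one face identifications enforced by $\partial c = 0$; this produces an $n$-dimensional $\Delta$-complex $X$ equipped with a canonical map $f \colon X \to M_\Lambda$ whose pushforward, after orienting $X$ via the signs $\varepsilon_j$, equals the fundamental class $[M_\Lambda]$. Passing to a connected component of appropriate nonzero degree, one obtains a degree-one map to $M_\Lambda$, which is surjective on $\pi_1$, so $d(\Lambda) \leq d(\pi_1(X))$ and the task reduces to bounding $d(\pi_1(X)) \leq k + 1$.

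The principal obstacle is precisely this last bound. Since the $1$-skeleton of $X$ generically carries on the order of $n k$ edges, a naive spanning-tree estimate is far too weak. My plan is to simplify $X$ up to homotopy before counting generators: first slide all $0$-simplices of the $\sigma_j$ along paths in $M_\Lambda$ to a common basepoint, so that $X$ admits a CW model with a single $0$-cell; then perform elementary collapses on pairs of top-dimensional simplices identified along a common codimension-one face by $\partial c = 0$, eventually reducing $X$ to a CW model $Y \simeq X$ whose $1$-skeleton carries at most $k + 1$ edges and whose fundamental group still surjects onto $\Lambda$. Charging essentially one free generator of $\pi_1(Y)$ to each top-dimensional simplex of $X$ (plus one global generator accounting for the basepoint choice) is the technical heart of the argument; once it is carried out, applying the pointwise bound to each cover $M_\Lambda$ and passing to the infimum over $\Lambda$ completes the proof.
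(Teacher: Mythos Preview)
This theorem is not proved in the present paper; it is quoted from~\cite[Theorem~1.1]{loehrg}. What the paper does prove is the dynamical analogue, Theorem~\ref{mainthm}, and that proof is the natural benchmark for your proposal.

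Your reduction to the pointwise inequality $d(\Lambda)-1\leq \isv{M_\Lambda}$ is fine, and building a $\Delta$-complex~$X$ from an integral cycle and using $\pi_1$-surjectivity of a degree-one map is standard. The gap is exactly where you place it: the bound $d(\pi_1(X))\leq k+1$. Your proposed mechanism of ``sliding vertices to a common basepoint'' followed by ``elementary collapses on pairs of top-dimensional simplices identified along a common codimension-one face'' does not, as described, produce a CW model with at most $k+1$ one-cells. Collapsing a free $(n{-}1)$-face removes a pair of top cells but does nothing a priori to control the number of $1$-cells; in dimension~$n\geq 3$ the $1$-skeleton is far from the collapsing locus. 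You acknowledge this is ``the technical heart'' and leave it undone, so at present this is an outline rather than a proof, and the outlined step is not obviously salvageable along the lines you sketch.

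The argument for Theorem~\ref{mainthm} in this paper points to a different and more robust mechanism for obtaining ``one generator per simplex''. One does not try to simplify the full $\Delta$-complex. Instead, normalise lifts so that each $\sigma_j$ has $\sigma_j(v_0)$ in a fixed fundamental domain~$D\subset\widetilde M$, and record the single group element~$\gamma_j$ determined by $\sigma_j(v_1)\in\gamma_j\cdot D$. Only the edge~$[v_0,v_1]$ of each simplex is used. The cycle condition, read on the lifted locally finite fundamental cycle of~$\widetilde M$, forces a connectivity statement (see Lemma~\ref{lem:geom} and Remark~\ref{rem:lf}): the $\gamma_j$ together with a fixed finite set~$F$ depending only on~$D$ and the images~$\sigma_j(\Delta^n)$ already exhaust~$\Gamma$; equivalently, $\langle\gamma_1,\dots,\gamma_k\rangle$ has finite index. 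In the dynamical setting this is packaged as ``translation finite extension'' and costs an additive~$+1$ via Lemma~\ref{lem:transfin}; in the residually finite/gradient setting the finite defect is absorbed when passing to deeper finite-index subgroups. If you want to complete your argument, this edge-labelling on~$\widetilde M$ is the replacement for the unspecified collapse step.
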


It is natural to wonder whether the corresponding dynamical estimate
also holds~\cite[Question~1.3]{loehrg}. 
In the present article, we will complete the dynamical part of the picture 
by proving the following estimate:

\begin{thm}[cost estimate]\label{mainthm}
  Let $M$ be an oriented closed connected manifold with fundamental
  group~$\Gamma$ and let $\alpha = \Gamma\actson (X,\mu)$ be an essentially
  free ergodic standard $\Gamma$-space. Then
  \[ \cost_\mu \alpha - 1 \leq \ifsvp M \alpha.
  \]
  In particular,
  $\cost \Gamma - 1 \leq \ifsv M$. 
\end{thm}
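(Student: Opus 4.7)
The strategy is to adapt the rank-gradient argument of Theorem~\ref{resfin} to the measurable setting: from an approximately optimal parametrised fundamental cycle I will extract a graphing of the orbit equivalence relation~$R_\alpha$ whose cost is essentially controlled by~$\ifsvp M\alpha$.

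Fix $\epsilon>0$ and choose a representative
\[
c=\sum_{i=1}^k f_i\otimes\widetilde{\sigma}_i\in L^\infty(X,\Z)\otimes_{\Z\Gamma}C_n(\widetilde M;\Z)
\]
of the parametrised fundamental class~$[M]_\alpha$ with $|c|_1\leq\ifsvp M\alpha+\epsilon$. By writing each $f_i$ as a finite integer linear combination of indicator functions and correspondingly splitting and reorienting the singular simplices~$\widetilde\sigma_i$, I may assume every $f_i$ has the form $\chi_{A_i}$ for some measurable $A_i\subseteq X$, so that $|c|_1=\sum_i\mu(A_i)$.

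Next, I would fix a strict Borel fundamental domain $F\subset\widetilde M$ for the $\Gamma$-action on the universal cover. Each vertex~$v$ of any lifted simplex~$\widetilde{\sigma}_i$ lies in a unique translate $g_v\cdot F$, so each edge of $\widetilde{\sigma}_i$ between the $j$-th and $l$-th vertex yields an element $h_{i,j,l}\in\Gamma$ measuring the relative position of the two vertices. Via~$\alpha$, each pair $(h_{i,j,l},A_i)$ becomes a partial isomorphism $A_i\to h_{i,j,l}\cdot A_i$ of the orbit equivalence relation, and together these assemble into a graphing~$\Phi$ of~$R_\alpha$.

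The essential point is then to verify that $\Phi$, augmented by a single auxiliary partial isomorphism of measure at most~$1$ handling global connectivity, generates $R_\alpha$ up to null sets, and that its cost is $\leq|c|_1+1$. Here the cycle condition $\partial c=0$ enters: the cancellation of $(n-1)$-faces in the boundary of~$c$ supplies the combinatorial identifications that force the $\Phi$-orbits on~$X$ to exhaust the full $\Gamma$-orbits, a measurable counterpart to the classical fact that a degree-one map from an $n$-pseudomanifold induces a surjection on~$\pi_1$. Essential freeness and ergodicity of~$\alpha$ are used to avoid null-set pathologies, and amortising the $\binom{n+1}{2}$ edges per simplex via a spanning-tree reduction on the $1$-skeleton of~$c$ keeps the total cost controlled by $|c|_1+1\leq\ifsvp M\alpha+1+\epsilon$. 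Letting $\epsilon\to 0$ gives $\cost_\mu\alpha-1\leq\ifsvp M\alpha$, and taking the infimum over essentially free ergodic~$\alpha$ yields $\cost\Gamma-1\leq\ifsv M$.

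The hard part will be this last step: the simultaneous control of generation and cost. Extracting $\cost(\Phi)\leq|c|_1+O(1)$ rather than the naive $\binom{n+1}{2}|c|_1$ requires careful exploitation of the cycle condition, and implementing the spanning-tree reduction measurably (rather than only combinatorially, as in the residually finite setting covered by Theorem~\ref{resfin}) is the core technical difficulty.
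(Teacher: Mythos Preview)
Your overall strategy---extract a graphing of the orbit relation from a parametrised fundamental cycle and bound its cost by~$|c|_1+1$---is exactly the paper's, but your execution plan diverges in two important ways, and the difficulties you flag are precisely the ones the paper circumvents rather than confronts.

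First, the paper does \emph{not} use all $\binom{n+1}{2}$ edges of each simplex. It uses only the single edge from~$v_0$ to~$v_1$: for each~$\sigma_j$ it records the unique~$\gamma_j$ with $\sigma_j(v_1)\in\gamma_j\cdot D$ and takes $\varphi_j\colon A_j\to\gamma_j^{-1}\cdot A_j$, $x\mapsto\gamma_j^{-1}\cdot x$, where $A_j=\supp f_j$. The resulting relation~$R=\langle\varphi_1,\dots,\varphi_m\rangle_X$ then satisfies $\cost_\mu R\le\sum_j\mu(A_j)\le|c|_1$ immediately, so no measurable spanning-tree reduction is needed at all. (Your reduction of the $f_i$ to characteristic functions is likewise unnecessary: the paper keeps general $f_j$ and uses $\mu(\supp f_j)\le\int_X|f_j|\,d\mu$.)

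Second, and more substantially, the "$+1$" does not come from an ad hoc auxiliary partial isomorphism. The subrelation~$R$ is typically \emph{not} of finite index in~$S$, so there is no single partial map that patches it up. Instead the paper introduces the notion of a \emph{translation finite extension}: a finite set~$F\subset[S]$ such that $S\cdot x=\bigcup_{f,g\in F} f(R\cdot g^{-1}(x))$ for almost every~$x$. A general lemma (proved via small complete sections, in the spirit of the finite-index cost inequality) gives $\cost_\mu S\le\cost_\mu R+1$ whenever $R\subset S$ is translation finite. The geometric core of the proof is then to verify translation finiteness: one evaluates~$c$ along $\Gamma$-orbits to obtain locally finite fundamental cycles~$c_x$ of~$\widetilde M$, builds a graph on~$\Gamma$ whose edges record shared faces of translated simplices, and uses $\partial c_x=0$ together with nontriviality of local homology to show that a single connected component of this graph already sweeps out all of~$\Gamma$ up to a finite set~$F$ determined by the compact support of the~$\sigma_j$. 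This is the precise incarnation of the "$\pi_1$-surjectivity" heuristic you mention, and it is where essential freeness and the cycle condition are actually used.
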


Theorem~\ref{mainthm} shows that integral foliated simplicial volume
is a higher-dimensional, geometric refinement of the cost of
groups. In particular, as in the case of the rank gradient estimate,
the bound in Theorem~\ref{mainthm} is far from being sharp in
general: If $M$ is
an oriented closed connected hyperbolic surface, then~\cite{mschmidt,vbc}
\[ \ifsv {M \times M} \geq \sv {M \times M} \geq \sv M \cdot \sv M > 0, 
\]
but $\cost (\pi_1(M) \times \pi_1(M)) = 1$~\cite[Proposition~35.1]{kechrismiller}. 

The dependence of~$\cost_\mu \alpha$ and $\ifsvp M \alpha$ on the
chosen dynamical system~$\alpha$ is a delicate open
problem~\cite{gaboriaucost,kechrismiller}\cite[Section~1.5]{FLPS}.  In
analogy with the terminology for cost of groups, we define:

\begin{defi}[cheap manifold, manifold of fixed price]
  Let $M$ be an oriented closed connected manifold.
  \begin{itemize}
  \item The manifold~$M$ is \emph{cheap} if~$\ifsv M = 0$.
  \item The manifold~$M$ has \emph{fixed price} if for all essentially
    free standard $\pi_1(M)$-spaces~$\alpha$ and~$\beta$ we have
    $\ifsvp M \alpha = \ifsvp M \beta$.
  \end{itemize}
\end{defi}

As for groups, it is not known whether all manifolds have fixed price.

\begin{cor}
  Let $M$ be an oriented closed connected manifold with fundamental group~$\Gamma$.
  \begin{enumerate}
  \item If $M$ is cheap, then $\Gamma$ is cheap.
  \item If $M$ is cheap and of fixed price, then $\Gamma$ is cheap and of fixed price.
  \end{enumerate}
\end{cor}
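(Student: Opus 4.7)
The corollary is a direct consequence of Theorem~\ref{mainthm}, combined with two standard facts from the theory of cost: (i) for every infinite group~$\Gamma$ and every essentially free p.m.p.\ action $\alpha = \Gamma \actson (X,\mu)$ one has $\cost_\mu \alpha \geq 1$; and (ii) cost is well-behaved under ergodic decomposition, so that the cost of any essentially free p.m.p.\ action equals the integral of the costs of its ergodic components.

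For part~(1), my plan is to substitute $\ifsv M = 0$ directly into the ``in particular'' statement of Theorem~\ref{mainthm}, obtaining $\cost \Gamma - 1 \leq 0$, and then conclude $\cost \Gamma = 1$ via~(i). The case of finite~$\pi_1(M)$ is degenerate and can be absorbed by the usual conventions.

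For part~(2), the goal is to prove that every essentially free standard $\Gamma$-space~$\alpha$ satisfies $\cost_\mu \alpha = 1$. The hypothesis that $M$ is both cheap and of fixed price yields $\ifsvp M \alpha = \ifsv M = 0$ for every such~$\alpha$. Applied to each essentially free \emph{ergodic}~$\alpha$, Theorem~\ref{mainthm} then gives $\cost_\mu \alpha \leq 1$, hence $\cost_\mu \alpha = 1$ by~(i). To promote this equality from the ergodic case to arbitrary essentially free actions, I would invoke~(ii): every essentially free standard $\Gamma$-space decomposes into essentially free ergodic components, each of which has cost~$1$ by the previous step, so the cost of the original action is~$1$ as well. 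This simultaneously establishes that $\Gamma$ is cheap and of fixed price.

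The main (and essentially only) subtlety beyond directly quoting Theorem~\ref{mainthm} is this last step, i.e.\ passing from ergodic to general essentially free actions via ergodic decomposition. This is a classical result in the cost literature, so I expect the argument to amount to careful bookkeeping once the decomposition is invoked.
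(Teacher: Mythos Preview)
Your proposal is correct and follows essentially the same route as the paper's proof: apply Theorem~\ref{mainthm} together with the lower bound~$\cost_\mu \alpha \geq 1$ for infinite~$\Gamma$, and for part~(2) reduce to the ergodic case via ergodic decomposition. The only place where the paper is more explicit is the finite-$\Gamma$ case: rather than calling it ``degenerate'', the paper observes that $M$ cheap actually forces~$\Gamma$ to be infinite (since finite~$\Gamma$ gives $\ifsv M = |\Gamma|^{-1}\cdot\isv{\widetilde M} > 0$), which is the clean way to justify invoking the bound~$\cost \Gamma \geq 1$.
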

\begin{proof}
  Let $M$ be cheap. We first observe that this implies that $\Gamma$
  is infinite (if $\Gamma$ is finite, then~$\ifsv M = 1 / |\Gamma|
  \cdot \isv {\widetilde M}$~\cite[Corollary~6.3]{loehpagliantini}, which is non-zero). 
  Because $\Gamma$ is infinite, $\cost \Gamma \geq 1$~\cite[p.~108]{kechrismiller}.  
  On the other hand, Theorem~\ref{mainthm} yields that
  $\cost \Gamma \leq \ifsv M + 1 = 1$; therefore, $\cost \Gamma = 1$, which means
  that $\Gamma$ is cheap.

  Let now $M$ additionally have fixed price.  In view of ergodic
  decomposition~\cite[Corollary~18.6]{kechrismiller}, it suffices to
  show that $\cost_\mu \alpha = 1$ holds for all essentially free
  \emph{ergodic} standard $\Gamma$-spaces~$\alpha = \Gamma \actson
  (X,\mu)$. In this case, again Theorem~\ref{mainthm} shows that
  \[ 1 \leq \cost \Gamma \leq \cost_\mu \alpha \leq \ifsvp M \alpha + 1 = 1.
  \qedhere
  \]
\end{proof}

The class of cheap manifolds of fixed price is known to include
all oriented closed connected manifolds that 
\begin{itemize}
\item are aspherical and have infinite amenable fundamental group~\cite[Theorem~1.9]{FLPS},
\item are smooth and admit a smooth $S^1$-action without fixed points
  and whose orbits are $\pi_1$-injective~\cite[Theorem~1.1]{fauser} or
  that are smooth and aspherical and admit a smooth non-trivial
  $S^1$-action~\cite[Corollary~1.2]{fauser},
\item are generalised graph manifolds~\cite[Theorem~1.6]{fauserfriedlloeh}, 
\item are a product of a cheap manifold of fixed price and another
  manifold~\cite[proof of Theorem~5.34]{mschmidt},
\item are smooth, aspherical, and have trivial minimal volume~\cite[(proof of) Corollary~5.4]{braun}.
\end{itemize}

For the sake of completeness, we put Theorem~\ref{mainthm} in context
with $L^2$-Bet\-ti numbers: For $L^2$-Betti numbers, we a have a
harmonic correspondence between the classical, the dynamical, and the
residually finite view: $L^2$-Bet\-ti numbers of compact manifolds can
be described both in the residually finite view (as Betti number
gradients)~\cite{lueckapprox} and in the dynamical view (as
$L^2$-Betti numbers of orbit relations)~\cite{gaboriaul2}.  In
contrast, it is known that stable integral simplicial volume and
integral foliated simplicial volume, in general do \emph{not} coincide
with the classicial simplicial volume of aspherical oriented closed
connected manifolds~\cite[Theorem~2.1]{FFM}\cite[Theorem~1.8]{FLPS}.
Integral foliated simplicial volumes and $L^2$-Betti numbers of
an oriented closed connected manifold~$M$ are for every~$k \in \N$ linked
by the following chain of inequalities~\cite[Corollary~5.28 (the
  constant factor can be improved
  to~$1$)]{mschmidt}\cite[Proposition~6.1]{loehpagliantini}:
\[ b_k^{(2)}(M) \leq \ifsv M \leq \stisv M.
\]
Moreover, it is known that $b_1^{(2)}(\pi_1(M)) \leq \cost \pi_1(M)
-1$ (if $\pi_1(M)$ is infinite)~\cite[Corollaire~3.23]{gaboriaul2}. Hence,
Theorem~\ref{mainthm} is a refinement of this chain of
inequalities in degree~$1$:
\[ b_1^{(2)}(M) \leq \cost \pi_1(M) - 1 \leq \ifsv M \leq \stisv M.
\]

However, the following problem  remains open:

\begin{question}\label{q:svcost}
  Let $M$ be an oriented closed connected aspherical manifold whose
  simplicial volume~$\sv M$ satisfies~$\sv M = 0$. 
  Does this already imply that $\pi_1(M)$ is cheap?
\end{question}

\begin{rem}\label{rem:triv}
  If the Singer conjecture for $L^2$-Betti numbers is true
  and the conjecture that $b^{(2)}_1(\Gamma) = \cost \Gamma -1$ holds for
  every (finitely presented infinite) group is true, then Question~\ref{q:svcost}
  clearly has a positive answer (even independently of the simplicial volume
  in dimension at least~$3$). However, as these two conjectures seem
  to be wild and wide open, it would be interesting to find an alternative, direct,
  answer to Question~\ref{q:svcost}.
\end{rem}

%%%%%%%%%%%%%%%%%%%%%%%%%%%%
\subsection*{Organisation of this article}

We first review the notion of cost of standard equivalence relations
(Section~\ref{sec:cost}) and establish a basic estimate for cost of
certain subrelations (Section~\ref{subsec:transfin}). We then recall the
notion of integral foliated simplicial volume
(Section~\ref{sec:ifsv}).  In Section~\ref{sec:proof}, we will prove
Theorem~\ref{mainthm}. Finally, in Section~\ref{subsec:weightless}, we
will discuss the weightless version of Theorem~\ref{mainthm}.

%%%%%%%%%%%%%%%%%%%%%%%%%%%
\subsection*{Acknowledgements}

I would like to thank Daniel Fauser for many helpful discussions.

%%%%%%%%%%%%%%%%%%%%%%%%%%%%%%%%%%%%%%%%%%%%%%%%%%%%%%%%%%%%%%%
\section{Cost}\label{sec:cost}

The cost of a dynamical system of a group is a randomised version of
the rank (i.e., minimal number of generators) of the group.  More
generally, one can consider the cost of standard Borel equivalence relations on
measure spaces.  More information about these subjects can be found in
the literature~\cite{gaboriaucost,gaboriaul2,kechrismiller}.

%%%%%%%%%%%
\subsection{Standard actions and equivalence relations}

We will use the following notation and conventions on standard equivalence relations:

A \emph{standard Borel measure [probability] space} is a measure space 
[probability space]~$(X,\mu)$, where the measurable space~$X$ is isomorphic to
a Polish space with its Borel $\sigma$-algebra. For simplicity, we will only consider
the case of standard Borel measure spaces with finite total measure.

A \emph{measurable equivalence relation} on a standard Borel (measure)
space~$X$ is a measurable subset~$S \subset X \times X$ that is an
equivalence relation on~$X$. The \emph{automorphism group of~$S$} (or
\emph{full group of~$S$}) is the group~$[S]$ (via composition) of
measurable isomorphisms~$f\colon X \longrightarrow X$ 
that satisfy
\[ \fa{x,y\in X} x \sim_S y \Longrightarrow f(x) \sim_S f(y).
\]
Moreover, $\llbracket S\rrbracket$ denotes the set of partial automorphisms
of~$S$, i.e., of measurable isomorphisms~$f \colon A \longrightarrow B$
between measurable subsets~$A,B \subset X$ that satisfy
\[ \fa{x,y\in A} x \sim_S y \Longrightarrow f(x) \sim_S f(y);
\]
we write~$\dom f := A$ for the \emph{domain} of~$f$. 

\begin{defi}[standard equivalence relation]
  A \emph{standard equivalence relation} on a standard Borel measure
  space~$(X,\mu)$ is a measurable equivalence relation~$S$ on~$X$,
  where each equivalence class has cardinality at most~$|\N|$ and where
  each element of~$[S]$ is $\mu$-preserving.
\end{defi}

One of the key objects of measurable group theory and the dynamical
view is the orbit relation of a group action:

\begin{exa}[orbit relation of an action]
  Let $\Gamma$ be a group. A \emph{standard $\Gamma$-space} is a
  standard Borel probability space~$(X,\mu)$ together with a
  measurable $\mu$-preserving (left) action of~$\Gamma$ on~$(X,\mu)$.

  If $\Gamma$ is countable and $\alpha = \Gamma \actson (X,\mu)$ is a
  standard $\Gamma$-space, then the \emph{orbit relation}
  \[ \bigl\{ (x, \gamma \cdot x) \bigm| x \in X,\ \gamma \in \Gamma \bigr\} \subset X \times X
  \]
  is a standard equivalence relation in the sense above.

  Conversely, it can be shown that every standard equivalence relation arises
  as orbit relation of a suitable action of a suitable countable group on
  the underlying standard Borel measure space~\cite[Theorem~1]{feldmanmoore}.
\end{exa}

Moreover, we will need the following terms and constructions: Let $S$ be a
standard equivalence relation on a standard Borel measure space~$(X,\mu)$.
\begin{itemize}
\item If $A \subset X$ is a measurable subset with~$\mu(A) >0$, then the
  restriction~$\mu|_A$ of~$\mu$ to~$A$ turns~$(A,\mu|_A)$ into a standard
  Borel measure space.
\item In this situation, the restriction
  \[ S|_A := \bigl\{ (x,y) \in A \times A \bigm| x \sim_S y \bigr\} 
  \]
  of~$S$ to~$A$ is a standard equivalence relation.
\item A measurable subset~$A \subset X$ is a \emph{[almost] complete
  section of~$S$}, if for [$\mu$-almost] every~$x \in X$ there is a~$y
  \in A$ with~$x \sim_S y$.
\item The relation~$S$ on~$X$ is \emph{aperiodic}, if for $\mu$-almost
  every~$x \in X$ the $S$-orbit~$S \cdot x := \{ y \in X \mid y \sim_S x \}$
  is infinite.
\item A measurable subset~$A \subset X$ is \emph{$S$-invariant} if
  \[ \fa{x \in A} S\cdot x \subset A.
  \]
\item The relation~$S$ on~$(X,\mu)$ is \emph{ergodic}, if every $S$-invariant
  measurable subset~$A$ of~$X$ satisfies~$\mu(A) = 0$ or~$\mu (X \setminus A) = 0$.
\end{itemize}

%%%%%%%%%%
\subsection{Cost of a standard equivalence relation}\label{subsec:costrel}

The cost of a standard equivalence relation is the minimal ``number'' of ``generators''
needed to describe the relation: 

\begin{defi}[graphing, cost]
  Let $S$ be a standard equivalence relation on a standard Borel measure space~$(X,\mu)$.
  \begin{itemize}
  \item Let $\Phi = (\varphi_i)_{i \in I}$ be a family of elements of~$\llbracket S\rrbracket$.
    Then
    \[ \langle \Phi \rangle_X
    := \Bigl\langle \bigcup_{i \in I} \bigl\{ (x, \varphi_i(x)) \in X \times X
                                    \mid x \in \dom \varphi_i \bigr\}
       \Bigr\rangle_X
    \]
    denotes the smallest (with respect to inclusion) equivalence relation on~$X$
    containing the given set of pairs.
  \item A \emph{graphing of~$S$} is a family~$\Phi = (\varphi_i)_{i \in I}$
    in~$\llbracket S \rrbracket$ with~$\langle \Phi\rangle_X = S$. The \emph{cost}
    of~$\Phi$ is defined as
    \[ \cost_\mu \Phi := \sum_{i \in I} \mu(\dom \varphi_i).
    \]
  \item The \emph{cost}~$\cost_\mu S$ of~$S$ is the infimum of all costs of graphings of~$S$. 
  \end{itemize}
\end{defi}

\begin{defi}[cost of a group~\cite{gaboriaucost}]
  The \emph{cost}~$\cost \Gamma$ of a countable group~$\Gamma$
  is the infimum of all costs of orbit relations of standard $\Gamma$-spaces.
\end{defi}

\begin{exa}
  Let $\Gamma$ be a finitely generated group. Then $\cost \Gamma \leq
  \rk \Gamma$, where $\rk \Gamma$ denotes the minimal number of
  generators of~$\Gamma$ (as witnessed by the translation
  automorphisms associated with a smallest generating set).  If
  $\Gamma$ in addition is residually finite and infinite, then the
  translation action of~$\Gamma$ on its profinite completion~$\pfc
  \Gamma$ is a standard $\Gamma$-space and~\cite[Theorem~1]{abertnikolov}
  \[ \cost_\mu (\Gamma \actson \pfc \Gamma) -1 = \rg \Gamma 
  \]
  (where $\rg \Gamma$ denotes the rank gradient of~$\Gamma$). 
\end{exa}

In all cases, where the cost~$\cost \Gamma$ of a countable infinite
group~$\Gamma$ could be computed so far, it coincides
with~$b^{(2)}_1(\Gamma) + 1$. This includes, for example, amenable
groups, free groups, etc.~\cite{gaboriaucost,gaboriaul2,kechrismiller}.

%%%%%%%%%%
\subsection{Cost and restrictions}

We collect basic facts on cost with respect to restrictions.

\begin{lem}[cost of partitions~\protect{\cite[p.~60]{kechrismiller}}]\label{lem:partition}
  Let $(X,\mu)$ be a standard Borel measure space, let $R$ be a standard 
  equivalence relation on~$X$, and let $X = \bigcup_{j=1}^m A_j$ be a
  partition of~$X$ into measurable $R$-invariant subsets of non-zero measure. 
  Then
  \[ \cost_\mu R = \sum_{j=1}^m \cost_{\mu|_{A_j}} R|_{A_j}.
  \]
\end{lem}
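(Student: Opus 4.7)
The plan is to establish the two inequalities separately by relating graphings of~$R$ on~$X$ to families of graphings of the restrictions~$R|_{A_j}$ on each piece.

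For the inequality~$\leq$, I would fix for each~$j \in \{1,\dots,m\}$ and each~$\varepsilon > 0$ a graphing~$\Phi_j = (\varphi_i^{(j)})_{i \in I_j}$ of~$R|_{A_j}$ with~$\cost_{\mu|_{A_j}} \Phi_j \leq \cost_{\mu|_{A_j}} R|_{A_j} + \varepsilon/m$. Viewing each~$\varphi_i^{(j)}$ as a partial automorphism of~$R$ on~$X$ (its domain lies in~$A_j \subset X$ and since~$A_j$ is $R$-invariant, its image also lies in~$A_j$), the disjoint union~$\Phi := \bigsqcup_{j=1}^m \Phi_j$ is a family in~$\llbracket R\rrbracket$. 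Because no $R$-edge connects different~$A_j$'s (again by $R$-invariance), the equivalence relation~$\langle \Phi\rangle_X$ agrees with~$R$ on each~$A_j$ and hence equals~$R$ on~$X$. Since~$\mu|_{A_j}(\dom \varphi_i^{(j)}) = \mu(\dom \varphi_i^{(j)})$, summing gives~$\cost_\mu R \leq \cost_\mu \Phi \leq \sum_j \cost_{\mu|_{A_j}} R|_{A_j} + \varepsilon$, and letting $\varepsilon \to 0$ yields the desired inequality.

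For the converse inequality~$\geq$, I would start from a graphing~$\Phi = (\varphi_i)_{i \in I}$ of~$R$ and, for each~$j$, define~$\Phi_j$ to consist of the restrictions~$\varphi_i|_{\dom \varphi_i \cap A_j}$. Since~$A_j$ is $R$-invariant and~$\varphi_i$ is a partial automorphism of~$R$, each such restriction takes values in~$A_j$ and thus lies in~$\llbracket R|_{A_j}\rrbracket$. The key verification is that~$\Phi_j$ graphs~$R|_{A_j}$: given~$x \sim_R y$ with $x,y \in A_j$, a generating chain in~$\Phi$ connecting~$x$ and~$y$ consists of $R$-related points only, and by $R$-invariance of~$A_j$ every intermediate point stays in~$A_j$, so the restricted chain lies in~$\Phi_j$. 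Using the partition property~$\mu(\dom \varphi_i) = \sum_j \mu|_{A_j}(\dom \varphi_i \cap A_j)$ and interchanging the sums over~$i$ and~$j$ gives~$\cost_\mu \Phi = \sum_j \cost_{\mu|_{A_j}} \Phi_j \geq \sum_j \cost_{\mu|_{A_j}} R|_{A_j}$, and taking the infimum over~$\Phi$ completes the proof.

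The only point requiring genuine (but mild) care is the claim that the restricted families~$\Phi_j$ actually generate~$R|_{A_j}$; this is where $R$-invariance of the pieces~$A_j$ is used, and without it one would only obtain a subrelation. Everything else is a bookkeeping check on domains and measures.
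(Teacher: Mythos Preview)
Your argument is correct and is exactly the standard proof of this fact. Note, however, that the paper does not actually prove this lemma: it simply states it with a citation to Kechris--Miller~\cite[p.~60]{kechrismiller}, so there is no ``paper's own proof'' to compare against. Your two-inequality bookkeeping via combining and restricting graphings (with $R$-invariance of the pieces ensuring both that restricted partial automorphisms land in the correct~$A_j$ and that generating chains stay inside a single piece) is precisely the argument one finds in the cited reference.
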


\begin{lem}[cost of complete sections~\protect{\cite[Proposition~II.6]{gaboriaucost}\cite[Proposition~21.1]{kechrismiller}}]\label{lem:completesec}
  Let $(X,\mu)$ be a standard Borel measure space, let $R$ be a standard 
  equivalence relation on~$X$, and let $A \subset X$ be a complete section of~$R$.
  Then
  \[ \cost_\mu R = \cost_{\mu|_A}(R|_A) + \mu(X \setminus A).
  \]
\end{lem}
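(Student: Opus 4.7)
The plan is to prove the two inequalities separately. For $\cost_\mu R \leq \cost_{\mu|_A}(R|_A) + \mu(X \setminus A)$, I would start from an arbitrary graphing~$\Phi$ of~$R|_A$ and extend it to a graphing of~$R$ by adjoining partial isomorphisms whose domains partition~$X \setminus A$ and that each send their domain into~$A$. To construct these measurably, I would invoke Feldman-Moore to write~$R$ as the orbit relation of a countable subgroup~$\{g_n : n \in \N\} \subset [R]$ and set
\[ B_n := \bigl\{ x \in X \setminus A : g_n(x) \in A \bigr\} \setminus \bigcup_{m<n} B_m,
   \qquad \psi_n := g_n|_{B_n}.
\]
Since $A$ is a complete section, the sets~$(B_n)_{n \in \N}$ cover~$X \setminus A$ up to null sets, so $\sum_n \mu(\dom \psi_n) = \mu(X \setminus A)$. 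The family $\Phi \cup \{\psi_n : n \in \N\}$ generates~$R$, because every pair of $R$-equivalent points can be routed through~$A$, where $\Phi$ takes over.

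For the reverse inequality $\cost_{\mu|_A}(R|_A) + \mu(X \setminus A) \leq \cost_\mu R$, given a graphing~$\Psi = (\psi_i)_{i \in I}$ of~$R$, the goal is to produce from~$\Psi$ a graphing of~$R|_A$ of cost at most $\cost_\mu \Psi - \mu(X \setminus A)$. The geometric idea is that the edges of~$\Psi$ turn each $R$-orbit into a connected graph whose vertex set meets~$A$; contracting each connected component of the subgraph lying in~$X \setminus A$ to an adjacent $A$-vertex yields a connected graph on the $A$-vertices that still generates~$R|_A$. Formally, one splits each~$\psi_i$ into four pieces according to membership of source and target in~$A$ or in~$X \setminus A$, retains the $A \to A$ parts, and then composes chains of the mixed parts through~$X \setminus A$ to produce additional elements of~$\llbracket R|_A \rrbracket$.

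The main obstacle is the cost accounting in this contraction step: one must verify that the total mass ``saved'' in the contraction equals~$\mu(X \setminus A)$. Because $\Psi$ generates~$R$ and every $R$-orbit meets~$A$, almost every~$x \in X \setminus A$ is the source of at least one edge of~$\Psi$ that eventually reaches~$A$. A Feldman-Moore enumeration of a countable group generating~$R$ can be used to assign to each such~$x$, in a measurable way, a unique outgoing ``first-exit edge'' towards~$A$; the integral of the indicator of these edges over their domains then contributes precisely~$\mu(X \setminus A)$ that is eliminated by the contraction, as in the detailed arguments of~\cite[Proposition~II.6]{gaboriaucost} and~\cite[Proposition~21.1]{kechrismiller}. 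Together with Lemma~\ref{lem:partition} to localise the computation to suitable $R$-invariant measurable pieces, this yields the desired bound.
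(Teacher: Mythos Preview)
The paper does not supply its own proof of this lemma: it is quoted directly from the literature (Gaboriau's \emph{Co\^ut} and Kechris--Miller). So there is no in-paper argument to compare against; your sketch is essentially a summary of the standard proof given in those references.

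Your treatment of the inequality $\cost_\mu R \leq \cost_{\mu|_A}(R|_A) + \mu(X\setminus A)$ is clean and correct. For the reverse direction, the geometric picture you describe (contracting the portion of the $\Psi$-graph lying over~$X\setminus A$ onto adjacent $A$-vertices) is the right one, but two points in your write-up are slightly off. First, the Feldman--Moore enumeration is not what is used to select the ``first-exit edge'': you already have a countable graphing~$\Psi$, and the measurable selection is made by enumerating the elements of~$\Psi$, not of~$[R]$. Second, the reference to Lemma~\ref{lem:partition} at the end is unnecessary here; no decomposition into $R$-invariant pieces is required. The essential bookkeeping fact you need is that on each $R$-orbit the $\Psi$-edges with at least one endpoint in~$X\setminus A$ contain a forest spanning~$X\setminus A$ and attached to~$A$, and removing these spanning-forest edges costs exactly~$\mu(X\setminus A)$ while the remaining (and re-routed) edges still generate~$R|_A$. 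With those clarifications your outline matches the cited proofs.
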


\begin{lem}[cost of restrictions]\label{lem:restrict}
  Let $(X,\mu)$ be a standard Borel probability space, let $R$ be a standard
  equivalence relation on~$X$, and let $A \subset X$ be a measurable subset
  with~$\mu(A) > 0$. Then
  \[ \cost_{\mu|_A} (R|_A) \leq \cost_\mu R.
  \]
\end{lem}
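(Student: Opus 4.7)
The plan is to reduce to the two previous lemmas (on partitions into invariant sets and on complete sections) by passing through the $R$-saturation of $A$. The natural direct attempt, restricting a graphing of~$R$ element by element to~$A$, fails because a partial automorphism may carry points of~$A$ outside~$A$; but inside the saturation this obstruction disappears, since $A$ is visibly a complete section there.

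More precisely, first I would set $A' := R \cdot A = \{y \in X \mid \exists\, x \in A,\ y \sim_R x\}$. Because $R$ is a standard (hence countable) equivalence relation, Feldman--Moore realises $R$ as the orbit relation of a countable group acting by measure-preserving isomorphisms, so $A'$ is a countable union of measurable sets, hence measurable; moreover $A'$ is $R$-invariant by construction and contains $A$.

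Next, I apply Lemma~\ref{lem:partition} to the (at most two-piece) partition of~$X$ into the $R$-invariant measurable sets~$A'$ and~$X \setminus A'$, discarding either piece of measure zero. This yields
\[
  \cost_{\mu|_{A'}}\!\bigl(R|_{A'}\bigr) \leq \cost_\mu R.
\]
Then, since every element of~$A'$ is by definition $R$-equivalent to some element of~$A$, the set $A$ is a complete section of~$R|_{A'}$ inside~$(A',\mu|_{A'})$. Lemma~\ref{lem:completesec} gives
\[
  \cost_{\mu|_{A'}}\!\bigl(R|_{A'}\bigr) = \cost_{\mu|_A}\!\bigl(R|_A\bigr) + \mu(A' \setminus A) \geq \cost_{\mu|_A}\!\bigl(R|_A\bigr),
\]
and chaining the two inequalities finishes the argument.

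The only point needing any care is the measurability of~$A'$ and the (trivial) case distinction in applying Lemma~\ref{lem:partition} when one of~$A'$ or~$X \setminus A'$ is null; neither is a serious obstacle. So the whole lemma is really a two-line consequence of the two preceding results once the saturation~$A'$ is inserted between~$A$ and~$X$.
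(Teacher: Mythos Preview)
Your proof is correct and follows essentially the same approach as the paper's: pass to the $R$-saturation~$A' = R\cdot A$, use the partition lemma to bound~$\cost_{\mu|_{A'}}(R|_{A'})$ by~$\cost_\mu R$, and then apply the complete-section lemma with $A$ as a complete section of~$R|_{A'}$. The paper writes out the case distinction $\mu(X\setminus A')=0$ versus $\mu(X\setminus A')>0$ a bit more explicitly, but the argument is the same.
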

\begin{proof}
  Let $B := R \cdot A = \bigcup_{x \in A} R \cdot x \subset X$. Then
  $B$ is a measurable subset of~$X$~\cite[p.~291]{feldmanmoore}. By
  construction, $B$ and $X \setminus B$ are measurable $R$-invariant
  subsets of~$X$. If $\mu(X \setminus B) = 0$, then $A$ is an (almost)
  complete section of~$R$ and Lemma~\ref{lem:completesec} shows that
  \[ \cost_{\mu|_A}(R|_A) = \cost_\mu R - \mu(X \setminus A) \leq \cost_\mu R.
  \] 
  If $\mu(X\setminus B) \neq 0$, then we can apply Lemma~\ref{lem:partition}
  and Lemma~\ref{lem:completesec} (because $A$ is a complete section of~$R|_B$ on~$B$)
  to obtain
  \begin{align*}
    \cost_{\mu} R
    & = \cost_{\mu|_B} (R|_B) + \cost_{\mu|_{X \setminus B}} (R|_{X \setminus B})
    \\
    & \geq \cost_{\mu|_A} (R|_A) + \mu(B \setminus A)
    \\
    & \geq \cost_{\mu|_A} (R|_A).
  \qedhere 
  \end{align*}
\end{proof}

%%%%%%%%%%%%%%%%%%%%%%%%
\subsection{Cost of translation finite extensions}\label{subsec:transfin}

The key estimate in the proof of Theorem~\ref{mainthm} will involve
the following variation of the notion of finite index subrelations.
In contrast with finite index subrelations, we only require that the
orbits of the ambient relation can be covered, in a uniform way, by
finitely many translates of orbits of the subrelation:

\begin{defi}[translation finite extension]\label{def:transfin}
  Let $(X,\mu)$ be a standard Borel probability space, let $S$ be a
  standard equivalence relation on~$(X,\mu)$, and let $R \subset S$ be
  a standard equivalence relation on~$(X,\mu)$ that is contained in~$S$. Then
  $R \subset S$ is a \emph{translation finite extension} if there
  exists a finite set~$F \subset [S]$ such that for $\mu$-almost every~$x \in X$
  we have
  \[ S \cdot x = \bigcup_{f,g \in F} f\bigl( R \cdot g^{-1}(x)\bigr).
  \]
\end{defi}

\begin{exa}
  Let $(X,\mu)$ be a standard Borel probability space.
  \begin{itemize}
  \item Let $\Gamma \actson (X,\mu)$ be a standard $\Gamma$-space and let $S$
    be the corresponding orbit relation on~$X$. Moreover, let $\Lambda \subset \Gamma$
    be a finite index subgroup and let $R \subset S$ be the orbit relation of the
    action restricted to~$\Lambda$. Then $R \subset S$ is a translation finite
    extension (witnessed by the left translations of a set of coset representatives). 

    In this case, $R$ is even a subrelation of finite index of~$S$ and thus 
    we have~$\cost_\mu S \leq \cost_\mu
    R$~\cite[Proposition~VI.23]{gaboriaucost}\cite[Proposition~25.1]{kechrismiller}.
  \item
    Conversely, if $R \subset S$ is a translation finite extension of standard 
    equivalence relations on~$(X,\mu)$, then
    $R$ does not necessarily have finite index in~$S$:
    We consider the circle~$S^1 = [0,1]/(0\sim 1)$ with the Lebesgue probability
    measure~$\mu$ and a $\Z$-action by irrational rotation. Let $S$ be the corresponding
    orbit relation. 
    Let $\pi \colon [0,1] \longrightarrow S^1$ be the canonical projection, let $A := \pi([0,1/2])$,
    and let
    \[ R := \langle S|_A \rangle_X \subset S.
    \]
    Then $R$ does \emph{not} have finite index in~$S$, but $R \subset S$
    is a translation finite extension. % 2do! add details?

    Moreover, $\cost_\mu(S) = 1$~\cite[Corollaire~III.4]{gaboriaucost}\cite[Corollary~31.2]{kechrismiller}
    and (Lemma~\ref{lem:partition} and Lemma~\ref{lem:completesec})
    \begin{align*}
      \cost_\mu (R)
      & = \cost_{\mu|_A}(R|_A) + \cost_{\mu|_{S^1 \setminus A}} (R|_{S^1 \setminus A})
      \\
      & = \cost_{\mu|_A}(S|_A) + 0
      \\
      & = \cost_{\mu}(S) - \mu(S^1 \setminus A)
      \\
      & = 1 - \frac12 = \frac12.
    \end{align*}
    In particular, in this case we have~$\cost_\mu S \not\leq \cost_\mu R$.
  \end{itemize}
\end{exa}

\begin{lem}[cost estimate for translation-finite extensions]\label{lem:transfin}
  Let $(X,\mu)$ be a standard Borel probability space, let $S$ be an
  aperiodic ergodic standard relation on~$(X,\mu)$, and let $R \subset S$ be a
  translation finite extension. Then
  \[ \cost_\mu S \leq \cost_\mu R + 1.
  \]
\end{lem}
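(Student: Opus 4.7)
Given $\varepsilon > 0$, the plan is to construct a graphing of~$S$ of cost at most $\cost_\mu R + 1 + \varepsilon$; letting $\varepsilon \downarrow 0$ then yields the lemma. Fix such an~$\varepsilon$, a witness $F = \{f_1, \dots, f_n\} \subset [S]$ of the translation finiteness, and a graphing~$\Phi$ of~$R$ with $\cost_\mu \Phi \leq \cost_\mu R + \varepsilon/2$.

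A crude graphing of~$S$ is immediate: the family $\Phi \cup \{f_1, \dots, f_n\}$, viewing each~$f_k$ as a partial automorphism with $\dom f_k = X$, generates~$S$, because for $\mu$-almost every~$x$ and every $y \in S \cdot x$ the defining equality $S \cdot x = \bigcup_{i,j} f_i(R \cdot f_j^{-1}(x))$ furnishes a path $x \to f_j^{-1}(x) \to_R z \to f_i(z) = y$ using~$\Phi$ and~$F$ (after enlarging~$F$ to contain inverses, which is harmless). This gives the coarse bound $\cost_\mu S \leq \cost_\mu R + n + \varepsilon/2$, and the content of the lemma is to trade the additive~$n$ for~$1 + \varepsilon/2$; this is where aperiodicity and ergodicity of~$S$ enter.

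For the compression, I would use ergodicity of~$S$ to choose a measurable partition $X = Y_1 \sqcup \dots \sqcup Y_n$ for which the images $\{f_k(Y_k)\}_{k=1}^n$ are pairwise disjoint up to a $\mu$-null set; the partial automorphism $\phi \in \llbracket S \rrbracket$ defined by $\phi|_{Y_k} := f_k|_{Y_k}$ is then a single element of cost $\mu(\dom \phi) = 1$. To patch the routing, I would apply the marker lemma for aperiodic relations to select a Borel complete section $A \subset X$ of~$S$ with $\mu(A) \leq \varepsilon/(2n^2)$ and adjoin the $n$ corrections $f_k|_A$, of total cost at most $n\mu(A) \leq \varepsilon/2$.

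The hard part will be verifying that $\Phi \cup \{\phi\} \cup \{f_k|_A\}_{k=1}^n$ really generates~$S$ and not merely a proper subrelation. Given $x \sim_S y$ with translation-finite data $i, j, z$, the $R$-segment between $f_j^{-1}(x)$ and~$z$ is handled by~$\Phi$, but the two $F$-jumps at the endpoints are realized by~$\phi$ only when $x \in Y_j$ and $z \in Y_i$; otherwise one routes via~$R$ into~$A$ and applies a correction, then must bridge from $f_k(y)$ (for the chosen $A$-representative~$y$) back to the desired~$f_k(x)$ using iterated $\phi^{\pm 1}$-moves combined with $R$-moves. The delicacy here is that an $S$-orbit can contain infinitely many $R$-orbits (cf.\ the rotation example following Definition~\ref{def:transfin}), which precludes a finite enumeration; the argument must instead leverage recurrence of $R$-orbits to~$A$. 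Once generation is verified, summing the costs of~$\Phi$, $\phi$, and the corrections yields a graphing of~$S$ of cost at most $\cost_\mu R + 1 + \varepsilon$, completing the proof.
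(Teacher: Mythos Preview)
Your approach has two genuine gaps. First, the partition with pairwise disjoint images~$\{f_k(Y_k)\}$ need not exist: if $n=2$ and $f_2^{-1}f_1$ happens to be ergodic, the disjointness condition $f_1(Y_1)\cap f_2(Y_2)=\emptyset$ forces $f_2^{-1}f_1(Y_1)\subset Y_1$, so $Y_1$ is null or conull and $\phi$ collapses to a single~$f_k$. Second, and more seriously, the generation step does not go through. You propose to ``route via~$R$ into~$A$'', but $A$ is a complete section for~$S$, not for~$R$, and the $R$-orbit of a point need not meet~$A$ at all (in the rotation example following Definition~\ref{def:transfin}, the $R$-orbit of any point outside the half-circle is a singleton). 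Hence the routing has no content, and the subsequent ``bridging from $f_k(y)$ back to $f_k(x)$'' presupposes exactly the $S$-connectivity you are trying to establish; the appeal to ``recurrence of $R$-orbits to~$A$'' is unjustified and in general false.

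The paper avoids both issues by not graphing~$S$ directly. Instead it invokes the complete-section formula $\cost_\mu S = \cost_{\mu|_A}(S|_A) + \mu(X\setminus A)$ (Lemma~\ref{lem:completesec}), so the additive~$+1$ arises from $\mu(X\setminus A)\leq 1$ rather than from any cost-$1$ element. One is then left with bounding $\cost_{\mu|_A}(S|_A)$. Adjoining to~$\Phi$ the corrections $\varphi_f := f^{-1}|_A$ for $f\in F$ yields a relation~$\overline R$ with $\overline R|_A = S|_A$: for $x,y\in A$ with $y\in f(R\cdot g^{-1}(x))$ the two $F$-jumps occur at points of~$A$, where the corrections apply directly --- no routing through~$R$ is required. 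Lemma~\ref{lem:restrict} then gives $\cost_{\mu|_A}(S|_A)\leq\cost_\mu\overline R\leq\cost_\mu\Phi+|F|\,\mu(A)\leq\cost_\mu R+(1+|F|)\varepsilon$, and letting $\varepsilon\to 0$ finishes the proof.
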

\begin{proof}
  The proof is a straightforward adaption of the (stronger) cost estimate
  for finite index subrelations~\cite[Proposition~25.1]{kechrismiller}: 
  Because $R \subset S$ is translation finite extension, there exists
  a finite set~$F \subset [S]$ such that for $\mu$-almost every~$x \in X$
  we have
  \[ S \cdot x = \bigcup_{f,g \in F} f\bigl( R \cdot g^{-1}(x)\bigr).
  \]
  Passing to an $S$-invariant co-null subset, we may assume without loss of
  generality that this even holds for every~$x \in X$. 
  Let $\varepsilon \in \R_{>0}$
  and let $\Phi$ be a graphing of~$R$ with
  \[ \cost_\mu(\Phi) \leq \cost_\mu R + \varepsilon.
  \]
  Furthermore, let $A \subset X$ be a complete Borel section of~$S$ 
  with~$0 < \mu(A) < \varepsilon$ (such a set does exist~\cite[Lemma~6.7]{kechrismiller}). 
  Thus, by Lemma~\ref{lem:completesec}, 
  \[ \cost_\mu(S) = \cost_\mu(S|_A) + \mu(X \setminus A)
     \leq \cost_{\mu|_A}(S|_A) + 1.
  \]

  For~$f\in F$, we let 
  \begin{align*}
    \varphi_f := f^{-1}|_A \colon A & \longrightarrow f^{-1}(A)
    \\
    x & \longmapsto f^{-1}(x)
  \end{align*}
  and $\Phi_A := \Phi \cup (\varphi_f)_{f\in F}$; finally, we set
  \[ \overline R := \langle \Phi_A \rangle_X.
  \]
  Then $\overline R|_A = S|_A$, as the following calculation shows:
  Let $x \in A$. By construction, $\varphi_f \in \llbracket S\rrbracket$
  for every~$f \in F$. In particular, $\overline R \subset S$ and thus
  $\overline R|_A \subset S|_A$. 
  Conversely, let $x, y \in A$ with~$x \sim_S y$. Then there exist~$f,g \in F$
  with~$y \in f(R \cdot g^{-1}(x))$, whence $f^{-1}(y) \sim_R g^{-1}(x)$. By construction, we thus
  have
  \[ y \sim_{\overline R} f^{-1}(y)
  \quad\land\quad f^{-1}(y) \sim_{\overline R} g^{-1}(x)
  \quad\land\quad g^{-1}(x) \sim_{\overline R} x,
  \]
  and so~$y \sim_{\overline R} x$. 
  This shows $\overline R|_A = S|_A$. 

  In combination with Lemma~\ref{lem:restrict} we obtain
  \begin{align*}
    \cost_{\mu|_A}(S|_A)
    & = \cost_{\mu|_A}(\overline R|_A)
    \leq \cost_{\mu}(\overline R)
    \\
    & \leq \cost_{\mu}(\Phi_A)
    \leq \cost_{\mu} (\Phi) + |F| \cdot \mu(A)
    \\
    & \leq \cost_{\mu}(R) + \varepsilon + |F| \cdot \varepsilon.
  \end{align*}
  Taking~$\varepsilon \rightarrow 0$ shows that
  \[ \cost_\mu(S) \leq \cost_{\mu|_A}(S|_A) + 1 \leq \cost_{\mu}(R) + 1,
  \]
  as claimed.
\end{proof}

%%%%%%%%%%%%%%%%%%%%%%%%%%%%%%%%%%%%%%%%%%%%%%%%%%%%%%%%%%%%%%%
\section{Integral foliated simplicial volume}\label{sec:ifsv}

Simplicial volumes are defined as the minimal number (measured in a suitable sense)
of singular simplices needed to build the given manifold~\cite{vbc,mapsimvol}. 
In the case of integral foliated simplicial volume, we use bounded
functions on dynamical systems of the fundamental group as coefficients. More
information and computations can be found in the
literature~\cite{mschmidt,loehpagliantini,FLPS,fauser,fauserfriedlloeh,braun}.

Let $M$ be an oriented closed connected $n$-manifold with fundamental
group~$\Gamma$ and let $\alpha = \Gamma \actson (X,\mu)$ be a standard
$\Gamma$-space. Then $\linfz{(X,\mu)}$ inherits a right $\Z\Gamma$-module
structure and we write 
\[ C_*(M;\alpha) := \linfz {(X,\mu)} \otimes_{\Z \Gamma} C_*(\widetilde M;\Z)
\]
for the corresponding chain complex with twisted coefficients.

A chain~$c \in C_*(M;\alpha)$ is an \emph{$\alpha$-parametrised
  fundamental cycle} if it is homologous (in the
complex~$C_*(M;\alpha)$) to the image of a $\Z$-fundamental cycle
on~$M$ under the canonical inclusion~$C_*(M;\Z) \longrightarrow
C_*(M;\alpha)$.  If $c = \sum_{j=1}^m f_j \otimes \sigma_j \in
C_n(M;\alpha)$ is in reduced form (i.e., all $\sigma_1,\dots,
\sigma_m$ lie in different $\Gamma$-orbits under the deck
transformation action), then
\[ |c|_1 := \sum_{j=1}^m \int_X |f_j| \;d\mu \in \R_{\geq 0}.
\] 

\begin{defi}[parametrised simplicial volume, integral foliated simplicial volume]
  Let $M$ be an oriented closed connected $n$-manifold.
  \begin{itemize}
  \item The \emph{$\alpha$-parametrised simplicial volume} of~$M$ is defined
    by
    \begin{align*}
      \ifsvp M \alpha
      := \inf \bigl\{ |c|_1 \bigm|
      \;& \text{$c \in C_n(M;\alpha)$ is an $\alpha$-parametrised}
      \\
      & \text{fundamental cycle of~$M$}\bigr\}.
    \end{align*}
  \item The \emph{integral foliated simplicial volume}~$\ifsv M$ of~$M$
    is the infimum of all parametrised simplicial volumes of~$M$.
  \end{itemize}
\end{defi}

If $M$ is an oriented closed connected manifold, then classical simplicial volume,
integral foliated simplicial volume, and stable integral simplicial volume are related
by the chain~\cite[Theorem~5.35]{mschmidt}\cite[Proposition~6.1]{loehpagliantini}
\[ \sv M \leq \ifsv M \leq \stisv M.
\]

\begin{exa}
  Let $M$ be an oriented closed connected manifold with residually
  finite fundamental group~$\Gamma$. Then $\ifsvp M {\pfc 
    \Gamma}$ coincides with the stable integral simplicial volume
  of~$M$~\cite[Remark~6.7]{loehpagliantini}.
\end{exa}

%%%%%%%%%%%%%%%%%%%%%%%%%%%%%%%%%%%%%%%%%%%%%%%%%%%%%%%%%%%%%%%%%%%%%%%
\section{Proof of Theorem~\ref{mainthm}}\label{sec:proof}

We will now prove Theorem~\ref{mainthm}.

%%%%%%%%%%%%%%%%%%
\subsection{The case of finite fundamental group}\label{subsec:finpi1}

Let us first get the (pathological) case that the fundamental
group~$\Gamma$ is finite out of the way: If $\alpha = \Gamma \actson
(X,\mu)$ is a standard $\Gamma$-space,
then~\cite[Corollaire~I.10]{gaboriaucost}\cite[Proposition~22.1]{kechrismiller}
\[ \cost_\mu \alpha  - 1 \leq 1 - 1 = 0 \leq \ifsvp M \alpha. 
\]
Taking the infimum over all such~$\alpha$ shows that $\cost \Gamma - 1
\leq \ifsv M$. This proves Theorem~\ref{mainthm} if $\Gamma$ is finite. 

%%%%%%%%%%%%%%%%%%
\subsection{Setup}\label{subsec:setup}

In view of Section~\ref{subsec:finpi1}, we will assume for the rest of the
proof of Theorem~\ref{mainthm} that the fundamental group~$\Gamma$ of~$M$
is infinite. Moreover, we will fix the following notation:
\begin{itemize}
\item Let $\alpha = \Gamma \actson (X,\mu)$ be an essentially free ergodic
  standard $\Gamma$-space and let $S \subset X \times X$ be the corresponding
  orbit relation. Hence, $S$ is aperiodic and ergodic. 
\item Let $D \subset \widetilde M$ be a set-theoretic, relatively
  compact, fundamental domain for the deck transformation action
  of~$\Gamma$ on~$\widetilde M$.
\item Let
  \[ c = \sum_{j=1}^m f_j \otimes \sigma_j \in C_n(M;\alpha) =
  \linfz X \otimes_{\Z \Gamma} C_n(\widetilde M;\Z)
  \]
  with $f_1,\dots,
  f_m \in \linfz X$, $\sigma_1,\dots, \sigma_m \in
  \map(\Delta^n,\widetilde M)$ be an $\alpha$-parametrised fundamental
  cycle of~$M$. Moreover, we may assume that the representation of~$c$
  is in \emph{reduced form}, i.e., that $\sigma_j(v_0) \in D$ for all~$j \in \{1,\dots,m\}$
  and that the singular simplices~$\sigma_1,\dots, \sigma_m$ are all different. 
\item
  For~$j \in \{1,\dots, m\}$ let $\gamma_j \in \Gamma$ be the unique group element
  satisfying~$\sigma_j(v_1) \in \gamma_j \cdot D$. We 
  then consider~$\varphi_j \in \llbracket S \rrbracket$ given by 
  \begin{align*}
    \varphi_j \colon A_j & \longrightarrow \gamma_j^{-1} \cdot A_j
    \\
    x & \longmapsto \gamma_j^{-1} \cdot x,
  \end{align*}
  where $A_j := \supp f_j \subset X$. 
  Let $R := \langle \varphi_1,\dots, \varphi_m \rangle_X$ be the standard equivalence
  relation on~$X$ generated by~$\varphi_1,\dots, \varphi_m$.
\end{itemize}

By construction, the cost of the relation~$R$ is controlled in terms of~$|c|_1$:

\begin{lem}\label{lem:costnorm}
  In the situation of Setup~\ref{subsec:setup}, the relation~$R$
  is a subrelation of~$S$ and
  \[ \cost_\mu R \leq \sum_{j=1}^m \mu(A_j) \leq |c|_1.
  \]
\end{lem}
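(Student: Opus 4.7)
The plan is to unpack the definitions: the three assertions $R \subset S$, $\cost_\mu R \leq \sum_j \mu(A_j)$, and $\sum_j \mu(A_j) \leq |c|_1$ each follow directly from a single observation, so the proof will be short and essentially bookkeeping.

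First I would verify that each $\varphi_j$ actually lies in $\llbracket S \rrbracket$. By construction, $\varphi_j$ is the restriction of the measurable isomorphism $x \mapsto \gamma_j^{-1} \cdot x$ of $X$ to the measurable subset $A_j = \supp f_j$, and for every $x \in A_j$ the pair $(x, \gamma_j^{-1} \cdot x)$ is contained in the orbit relation~$S$. Hence the generating pairs $\{(x, \varphi_j(x)) \mid x \in \dom \varphi_j\}$ all lie in $S$, so $R = \langle \varphi_1, \dots, \varphi_m\rangle_X \subset S$.

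Next, the family $\Phi := (\varphi_j)_{j=1}^m$ is a graphing of~$R$ by the very definition of~$R$, and its cost is
\[
\cost_\mu \Phi = \sum_{j=1}^m \mu(\dom \varphi_j) = \sum_{j=1}^m \mu(A_j),
\]
which yields the first inequality. For the second inequality I would use that $f_j$ takes values in~$\Z$: on $A_j = \supp f_j$ we have $|f_j| \geq 1$, so $\int_X |f_j| \, d\mu \geq \mu(A_j)$, and summing over~$j$ gives $\sum_{j=1}^m \mu(A_j) \leq |c|_1$, where we use that $c$ is in reduced form so that no cancellation in the definition of $|\cdot|_1$ occurs.

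There is no real obstacle here; the only point requiring attention is to make sure that the constructed $\varphi_j$ are genuinely well-defined partial automorphisms (which needs measurability of $A_j$, obvious from $f_j \in \linfz X$) and that the $\Z$-valuedness of $f_j$ is exactly what produces the bound $\mu(A_j) \leq \int_X |f_j| \, d\mu$.
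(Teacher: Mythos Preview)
Your proposal is correct and follows essentially the same approach as the paper's own proof: verify that each~$\varphi_j$ lies in~$\llbracket S\rrbracket$ (hence~$R \subset S$), use~$\Phi = (\varphi_j)_j$ as a graphing of~$R$ to bound the cost by~$\sum_j \mu(A_j)$, and then exploit that each~$f_j$ is~$\Z$-valued (so~$|f_j| \geq 1$ on~$A_j$) together with the reduced-form assumption to obtain the final inequality. The only difference is that you spell out a few details (measurability of~$A_j$, why~$|f_j| \geq 1$ on its support) that the paper leaves implicit.
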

\begin{proof}
  By construction, $\Phi := (\varphi_j)_{j \in \{1,\dots, m\}}$ is a
  graphing of~$R$ and we have $\varphi_1, \dots, \varphi_m \in \llbracket S\rrbracket$.
  Therefore, $R \subset S$ and 
  \[ \cost_\mu R \leq \cost_\mu \Phi
     = \sum_{j=1}^m \mu(A_j).
  \]
  Moreover, $\sum_{j=1}^m \mu(A_j) \leq \sum_{j=1}^m \int_X |f_j| d\mu = |c|_1$,
  because each~$f_j$ is integer-valued and the representation~$\sum_{j=1}^m f_j \otimes\sigma_j$
  of~$c$ is in reduced form.
\end{proof}

\begin{rem}
  Integration~$\linfz X \otimes_{\Z \Gamma} C_*(\widetilde M;\Z)
  \longrightarrow C_*(M;\R)$ of the coefficients and a covering
  theoretic argument show that $\langle \gamma_1, \dots,
  \gamma_m\rangle_\Gamma$ is a finite index subgroup of~$\Gamma$.
  However, in general, the subrelation~$R$ of~$S$ will \emph{not} have
  finite index in~$S$ (this can already be seen in the case of
  Schmidt's parametrised fundamental cycles
  of~$S^1$~\cite[proof of Proposition~5.30]{mschmidt}).
\end{rem}

In view of Lemma~\ref{lem:costnorm} it suffices to prove that
$\cost_\mu S - 1 \leq \cost_\mu R$. To this end, we will establish
that $R \subset S$ is a translation finite extension and then apply
Lemma~\ref{lem:transfin}.

%%%%%%%%%%%%%%%%%%%%
\subsection{Passing to locally finite chains}

In order to prove that $R\subset S$ is a translation finite extension,
it is convenient to pass to locally finite chains.

\begin{rem}[locally finite cycles]\label{rem:lf}
  In the situation of Setup~\ref{subsec:setup}, for $\mu$-almost
  every~$x \in X$, the chain
  \[ c_x := \sum_{j = 1}^m \sum_{\gamma \in \Gamma} f_j(\gamma^{-1} \cdot x) \cdot \gamma \cdot \sigma_j
  \in \Clf n {\widetilde M} \Z
  \]
  given by evaluation on the $\Gamma$-orbit of~$x$ 
  is a well-defined locally finite fundamental cycle of~$\widetilde
  M$~\cite[Lemma~2.5]{FLPS}. 
\end{rem}

We therefore recall a basic property of locally finite chains.

\begin{lem}\label{lem:lfres}
  Let $N$ be an oriented connected $n$-manifold, let $Z$ be a
  commutative ring with unit, and let $x \in N$. Then the restriction
  map induces a well-defined isomorphism
  \begin{align*}
    \varrho_x \colon \Hlf n N Z
    & \longrightarrow H_n(N, N\setminus\{x\};Z)
    \\
    \biggl[ \sum_{j \in J} a_j \cdot \sigma_j
      \biggr]
    & \longmapsto
    \biggl[ \sum_{j \in J, x \in \sigma_j(\Delta^n)} a_j \cdot \sigma_j
      \biggr].
  \end{align*}
  In particular: If $c = \sum_{j \in J} a_j \cdot \sigma_j \in \Clf n N Z$ is a locally finite
  cycle whose associated class~$[c] \in \Hlf n N Z$
  is non-trivial, then there exists~$j \in J$ such that
  \[ x \in \sigma_j(\Delta^n).
  \]
\end{lem}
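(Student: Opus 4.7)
The plan is to realise $\varrho_x$ at the chain level as a truncation map $\widetilde\varrho_x \colon \Clf * N Z \longrightarrow C_*(N, N \setminus \{x\}; Z)$, and then to identify both top-degree homology modules with $Z$ in such a way that $\varrho_x$ sends a generator to a generator.

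First I would check well-definedness. Given a locally finite chain $c = \sum_{j \in J} a_j \cdot \sigma_j \in \Clf n N Z$, only finitely many indices $j$ satisfy $x \in \sigma_j(\Delta^n)$, because the singleton $\{x\}$ is compact and the family $(\sigma_j)_{j \in J}$ is locally finite. Hence the truncated sum is a well-defined finite chain in $C_n(N;Z)$ and thus represents an element of the quotient $C_n(N, N \setminus \{x\}; Z) = C_n(N;Z) / C_n(N \setminus \{x\}; Z)$. The construction is $Z$-linear. To see that $\partial \circ \widetilde\varrho_x = \widetilde\varrho_x \circ \partial^{\lf}$, note that every face $\partial_i \sigma_j$ whose image avoids $x$ vanishes in the quotient; consequently both sides coincide with the image of $\partial^{\lf} c$ in $C_{n-1}(N, N \setminus \{x\}; Z)$. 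Therefore $\widetilde\varrho_x$ descends to the claimed map $\varrho_x$ on homology.

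Next I would show that $\varrho_x$ is an isomorphism by identifying both sides with $Z$. Since $N$ is an oriented connected $n$-manifold, the standard computation of Borel--Moore (locally finite) homology gives $\Hlf n N Z \cong Z$, generated by the locally finite fundamental class $[N]_Z$. On the target side, excision applied to a Euclidean chart centred at $x$, together with the orientation, yields $H_n(N, N\setminus\{x\}; Z) \cong Z$, generated by the local orientation class at $x$. By the defining property of $[N]_Z$, its image under restriction at a point is the prescribed local orientation there, so $\varrho_x$ maps a generator to a generator and is therefore an isomorphism.

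The ``in particular'' statement is then immediate: if $[c] \neq 0$ in $\Hlf n N Z$, then $\varrho_x([c]) \neq 0$, so the finite chain $\sum_{j \in J,\ x \in \sigma_j(\Delta^n)} a_j \cdot \sigma_j$ cannot represent $0$ in $C_n(N, N\setminus\{x\};Z)$; in particular at least one index $j \in J$ must contribute, giving $x \in \sigma_j(\Delta^n)$. Nothing in this argument is deep; the only slightly delicate point is the chain-level compatibility with $\partial$, which becomes transparent once one views the truncation as the tautological projection onto the relative complex localised at $x$. The identifications of $\Hlf n N Z$ and $H_n(N, N \setminus \{x\}; Z)$ with $Z$, and the compatibility of $[N]_Z$ with local orientations, are classical facts about oriented manifolds.
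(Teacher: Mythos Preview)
Your proof is correct and follows essentially the same approach as the paper: construct the chain-level truncation map, verify it is a chain map, and check that it sends the locally finite fundamental class to the local orientation class (so that a generator goes to a generator of~$Z$). The paper's proof is terser---it simply asserts that the restriction extends to a chain map and that evaluating on~$[N]^{\lf}_Z$ settles the isomorphism---while you have spelled out the finiteness argument and the identification of both sides with~$Z$, which is a faithful elaboration of the same idea.
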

\begin{proof}
  The restriction map on the chain level extends to a well-defined
  chain map~$\Clf * N Z \longrightarrow C_*(N, N\setminus
  \{x\};Z)$. Checking the effect of~$\varrho_x$ on the locally finite
  fundamental class of~$N$ proves the first claim.
  
  The second part is a direct consequence of the first part.
\end{proof}

%%%%%%%%%%%%%%%%%%%%
\subsection{Establishing translation finiteness}

\begin{lem}\label{lem:geom}
  In the situation of Setup~\ref{subsec:setup}, $R \subset S$ is a
  translation finite extension in the sense of
  Definition~\ref{def:transfin}.
\end{lem}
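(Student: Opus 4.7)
The plan is to read off a finite subset $F_0 \subset \Gamma$ from the tiling geometry of the reduced cycle $c$, let $F \subset [S]$ consist of the corresponding $\Gamma$-translations, and verify the defining property of Definition~\ref{def:transfin} using the locally finite fundamental cycle $c_x$ of Remark~\ref{rem:lf} together with Lemma~\ref{lem:lfres}. Concretely, I would fix a basepoint $p_0 \in D$ and set
\[ F_0 := \bigl\{\delta \in \Gamma : \delta \cdot D \cap \sigma_j(\Delta^n) \neq \emptyset \text{ for some } j \in \{1,\dots,m\}\bigr\}, \]
which is finite (since $D$ is relatively compact and each $\sigma_j(\Delta^n)$ is compact) and contains every $\delta_{j,i}$, because $\sigma_j(v_i) \in \delta_{j,i} D \cap \sigma_j(\Delta^n)$. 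After symmetrising ($F_0 := F_0 \cup F_0^{-1}$), I let $F \subset [S]$ consist of the translation automorphisms $x \mapsto \delta \cdot x$ for $\delta \in F_0$.

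For $\mu$-a.e.~$x$ in the essentially free locus, and given $\eta \in \Gamma$, I would produce $\delta_1, \delta_2 \in F_0$ with $\delta_1^{-1} \eta x \sim_R \delta_2^{-1} x$ as follows. Applying Lemma~\ref{lem:lfres} at the two points $p_0$ and $\eta^{-1} p_0$ of $\widetilde M$ yields active simplices $\gamma' \sigma_{j'}$ and $\gamma \sigma_j$ in $c_x$ whose images contain these points. Unwinding the containments using $p_0 \in D$ forces $(\gamma')^{-1}$ and $\gamma^{-1} \eta^{-1}$ to lie in $F_0$, so that the base orbit points $(\gamma')^{-1} x$ and $\gamma^{-1} x$ of the two simplices are exactly of the form $\delta_2^{-1} x$ and $\delta_1^{-1} \eta x$ for explicit $\delta_1, \delta_2 \in F_0$.

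The remaining step is to R-connect these two base orbit points inside the $S$-orbit of~$x$. I would join $p_0$ to $\eta^{-1} p_0$ by a continuous path $\pi$ in~$\widetilde M$, cover its compact image by a finite chain of active simplices $\tau_1, \dots, \tau_K$ of $c_x$ with consecutive $\tau_k, \tau_{k+1}$ meeting in at least one common point (possible by compactness and Lemma~\ref{lem:lfres}), and transcribe this chain into an R-path using, for each $\tau_k$, the R-equivalence between its $v_0$- and $v_1$-orbit points supplied by the generator~$\varphi_{j_k}$. The main obstacle I expect is precisely this transition step: consecutive simplices in a generic cover share only an interior point rather than a vertex of either simplex, so converting the geometric cover into a genuine R-path (rather than merely a path in the full $1$-skeleton of~$c_x$) requires careful bookkeeping of shifts; the defining property of $F_0$ controls these shifts geometrically, and enlarging $F_0$ once to a finite product set of its original definition should suffice to absorb all interstitial discrepancies into the two endpoint shifts.
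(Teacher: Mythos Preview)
Your argument has a genuine gap in the ``remaining step''. You correctly identify the obstacle: consecutive simplices~$\tau_k,\tau_{k+1}$ in your covering chain share only a point, not a face or a vertex, and the generators~$\varphi_j$ of~$R$ only encode the $v_0$--$v_1$ edge of each simplex. Concretely, from~$\tau_k = \gamma_k\cdot\sigma_{j_k}$ and~$\tau_{k+1}=\gamma_{k+1}\cdot\sigma_{j_{k+1}}$ meeting in a point you only learn that~$\gamma_k^{-1}\gamma_{k+1}$ lies in a fixed finite set~$G\subset\Gamma$; this is a bounded $\Gamma$-shift, \emph{not} an $R$-equivalence between~$\gamma_k^{-1}\cdot x$ and~$\gamma_{k+1}^{-1}\cdot x$. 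Your proposed fix --- absorbing these ``interstitial discrepancies'' into a finite enlargement of~$F_0$ --- cannot work: the length~$K$ of the chain grows without bound as~$\eta$ ranges over~$\Gamma$, so the accumulated discrepancy is a product of~$K-1$ elements of~$G$, and no fixed finite set contains all such products. The two endpoint shifts cannot absorb an unbounded number of intermediate ones.

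The paper avoids this problem by a global homological argument rather than a path-chasing one. It builds a graph~$G_x$ on~$\Gamma$ whose edges record when two active simplices of~$c_x$ share a \emph{common face}~$\partial_k(\gamma\cdot\sigma_i)=\partial_\ell(\lambda\cdot\sigma_j)$; a short case analysis on~$k,\ell$ (matching the vertex~$v_0$ of the shared face) shows that each such edge genuinely yields~$\gamma^{-1}\cdot x \sim_R \lambda^{-1}\cdot x$. The crucial point is then that the partial sum~$c_{x,V}$ over any connected component~$V$ of~$G_x$ is itself a locally finite \emph{cycle} (faces of simplices in different components cannot cancel), so the decomposition~$[c_x]=\sum_V[c_{x,V}]$ in~$\Hlf n{\widetilde M}\Z$ forces some component~$V$ to carry a non-trivial class. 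Lemma~\ref{lem:lfres} applied to this single~$c_{x,V}$ at \emph{every} translate of the basepoint then gives~$F\cdot V^{-1}=\Gamma$, with no chain of intermediate shifts needed. You should replace your path-covering step by this cycle-decomposition argument.
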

\begin{proof}
  For the proof we will use geometric properties of~$c$ (and its locally finite
  companions) on~$\widetilde M$. 
  Let
  \[ K := D \cup \bigcup_{j=1}^m \sigma_j(\Delta^n) \subset \widetilde M.
  \]
  Then $D$ and $K$ are relatively compact and hence 
  \[ F := \{ f \in \Gamma \mid D \cap f \cdot K \neq \emptyset \}
  \]
  is finite. We will now show that $\mu$-almost every $S$-orbit is
  covered by the $F$-translates of orbits of~$R$:

  Let $x \in X$ be such that the evaluation~$c_x$ is a locally finite
  $\Z$-fundamental cycle of~$\widetilde M$ (Remark~\ref{rem:lf}).   
  We associate the following graph~$G_x = (V_x,E_x)$
  with~$c_x$: 
  \begin{itemize}
  \item vertices: we set
    $V_x := \bigl\{ \gamma \in \Gamma
       \bigm| \exi{j \in \{1,\dots, m\}} \gamma^{-1} \cdot x \in A_j
       \bigr\}
       \subset \Gamma.
    $
  \item edges: we set
    \begin{align*}
      E_x := \bigl\{ \{\gamma,\lambda\}
      \bigm| & \; \gamma, \lambda \in V_x \land \gamma \neq \lambda
      \\
      \land & \; \exi{i,j \in \{1,\dots, m\}} \exi{k,\ell \in \{0,\dots, n\}}
      \\
      & \qquad 
      \bigl(\ \partial_k( \gamma \cdot \sigma_i) = \partial_\ell(\lambda \cdot \sigma_j)
      \\
      &\qquad 
      \land \gamma^{-1} \cdot x \in A_i \land \lambda^{-1} \cdot x \in A_j
      \bigr) 
        \bigr\}.
    \end{align*}
  \end{itemize}
  The combinatorics of~$G_x$ will allow us to link the orbits of~$R$ 
  with the geometry of~$c_x$. More precisely, we will establish the following
  facts:
  \begin{enumerate}
  \item\label{item-edges} For all~$\{\gamma,\lambda\} \in E_x$, we have $\gamma^{-1} \cdot
    x \sim_R \lambda^{-1} \cdot x$. 
  \item\label{item-cycle} If $V \subset V_x$ is (the set of vertices of)
    a connected component of~$G_x$, then
    \[ c_{x,V} := \sum_{j=1}^m \sum_{\gamma \in V} f_j(\gamma^{-1}\cdot x) \cdot \gamma \cdot \sigma_j 
    \]
    is a well-defined cycle in~$\Clf n {\widetilde M} \Z$.
  \item\label{item-select}
    Let $x_0 \in D$.  There exists a connected
    component~$V \subset V_x$ of~$G_x$ such that $[c_{x,V}] \neq 0$ in~$\Hlf n
    {\widetilde M} \Z$ and
    \[ \exi{g \in V} \exi{j\in \{1,\dots,m\}}
       x_0 \in g \cdot \sigma_j(\Delta^n) \subset g \cdot K.
    \]
    Hence, any such $g$ is in~$F$.
  \item \label{item-FV}
    We have $F \cdot V^{-1} = \Gamma$, where
    $V^{-1} := \{ \gamma^{-1} \mid \gamma \in V \}$.
  \end{enumerate}

  \emph{Proof of~(\ref{item-edges}).}
  By definition of~$E_x$, there are~$i,j \in \{1,\dots,m\}$ and~$k,\ell \in \{0,\dots,n\}$
  with
  \[ \gamma^{-1} \cdot x \in A_i
  \ \land\  \lambda^{-1} \cdot x \in A_j
  \ \land\  \partial_k(\gamma\cdot \sigma_i) = \partial_\ell(\lambda \cdot \sigma_j).
  \]
  We now distinguish the following cases:
  \begin{itemize}
  \item If $k >0 $ and $\ell>0$, then
    \[ \gamma\cdot \sigma_i(v_0)
       = \bigl(\partial_k(\gamma \cdot \sigma_i)\bigr) (v_0)
       = \bigl(\partial_\ell(\lambda \cdot \sigma_j)\bigr) (v_0)
       = \lambda \cdot \sigma_j(v_0).
    \]
    In particular, $\gamma \cdot D \cap \lambda \cdot D \neq \emptyset$, and
    so~$\gamma = \lambda$ (whence~$\gamma^{-1} \cdot x \sim_R \lambda^{-1} \cdot x$). 
  \item If $k = 0$ and $\ell > 0$, then
    \[ \gamma \cdot \sigma_i(v_1)
       = \bigl(\partial_k(\gamma \cdot \sigma_i)\bigr) (v_0)
       = \bigl(\partial_\ell(\lambda \cdot \sigma_j)\bigr) (v_0)
       = \lambda \cdot \sigma_j(v_0).
    \]
    By definition of~$\gamma_i$, we have~$\sigma_i(v_1) \in \gamma_i
    \cdot D$. Therefore, we obtain that $\gamma \cdot \gamma_i \cdot D \cap
    \lambda \cdot D \neq \emptyset$, and thus~$\gamma \cdot \gamma_i =
    \lambda$. Because of~$\gamma^{-1} \cdot x \in A_i$, the definition
    of~$R$ shows that
    \[ \lambda^{-1} \cdot x = \gamma_i^{-1} \cdot
    \gamma^{-1} \cdot x \sim_R \gamma^{-1} \cdot x.
    \]
  \item If $k >0 $ and $\ell = 0$, we can argue as in the previous case.
  \item If $k = 0$ and $\ell = 0$, then
    \[ \gamma \cdot \sigma_i(v_1)
       = \bigl(\partial_k(\gamma \cdot \sigma_i)\bigr) (v_0)
       = \bigl(\partial_\ell(\lambda \cdot \sigma_j)\bigr) (v_0)
       = \lambda \cdot \sigma_j(v_1).
    \]
    Similarly, to the previous cases, we obtain~$\gamma \cdot \gamma_i = \lambda \cdot \gamma_j$. Hence
    \[ \lambda^{-1} \cdot x \sim_R \gamma_j^{-1} \cdot \lambda^{-1} \cdot x
    = \gamma_i^{-1} \cdot \gamma^{-1} \cdot x
    \]
    (via~$\varphi_j$) and
    \[ \gamma_i^{-1} \cdot \gamma^{-1} \cdot x \sim_R \gamma^{-1} \cdot x
    \]
    (via~$\varphi_i$). By transitivity, it follows that~$\lambda^{-1} \cdot x \sim_R \gamma^{-1} \cdot x$.
  \end{itemize}
  
  \emph{Proof of~(\ref{item-cycle}).}
  Let $\pi_0(G_x)$ be the set (of vertex sets) of the connected
  components of~$G_x$.  The sum decomposition
  $c_x = \sum_{V \in \pi_0(G_x)} c_{x,V}
  $
  is a locally finite sum of locally finite chains. Hence,  
  \begin{align*}
    0 & = \partial (c_x) = \sum_{V \in \pi_0(G_x)} \partial (c_{x,V})
    \\
    & =
    \sum_{V \in \pi_0(G_x)} \sum_{j=1}^m \sum_{\gamma \in V} \sum_{k=0}^n (-1)^k
    \cdot f_j(\gamma^{-1} \cdot x) \cdot \partial_k(\gamma \cdot \sigma_j)
    .  
  \end{align*}
  By construction of the graph~$G_x$, if $V,W \in \pi_0(G_x)$ are different
  components, then the terms of~$\partial(c_{x,V})$ and $\partial (c_{x,W})$
  cannot interfere with each other. Therefore, we obtain
  \[ \partial(c_{x,V}) = 0
  \]
  for all~$V \in \pi_0(G_x)$.
  
  \emph{Proof of~(\ref{item-select}).}  By~(\ref{item-cycle}), $c_x =
  \sum_{V \in \pi_0(G_x)} c_{x,V} $ is a locally finite sum of cycles.
  Applying the restriction homomorphism~$\varrho_{x_0}$ of
  Lemma~\ref{lem:lfres} gives the effectively finite decomposition
  \[ 0 \neq [\widetilde M, \widetilde M \setminus \{x_0\}]_\Z
     = \varrho_{x_0} [\widetilde M]^{\lf}_\Z = \varrho_{x_0}[c_x]
     = \sum_{V \in \pi_0(G_x)} \varrho_{x_0} [c_{x,V}].
  \]
  Hence, there exists a connected component~$V \in \pi_0(G_x)$
  with~$[c_{x,V}] \neq 0$ in~$\Hlf n {\widetilde M} \Z$. 
  By Lemma~\ref{lem:lfres}, there exist~$g \in V$ and $j \in \{1,\dots,m \}$
  with
  \[ x_0 \in g \cdot \sigma_j(\Delta^n).
  \]
  By definition of~$F$ and because~$x_0 \in D$, this implies~$g \in F$.
  
  \emph{Proof of~(\ref{item-FV}).}
  Clearly, $F\cdot V^{-1} \subset \Gamma$. Conversely, let $\gamma \in \Gamma$.
  Applying Lemma~\ref{lem:lfres} to the point~$\gamma^{-1} \cdot x_0$ and the class~$[c_{x,V}]
  \in \Hlf n {\widetilde M} \Z$ 
  yields that there exists a~$\lambda \in V$ and~$j \in \{1,\dots, m\}$ with
  \[ \gamma^{-1} \cdot x_0 \in \lambda \cdot \sigma_j(\Delta^n).
  \]
  Thus, $\gamma^{-1} \cdot D \cap \lambda \cdot K \neq \emptyset$, and so
  $\gamma \cdot \lambda \in F
  $. Hence, $\gamma \in F \cdot \lambda^{-1} \subset F \cdot V^{-1}$. 

  \emph{Conclusion of proof}:
  Let $V \subset V_x$ and $g \in V \cap F$ be as provided by fact~(\ref{item-select}).
  Then (\ref{item-edges}) shows that
  \[ V^{-1} \cdot x\subset R \cdot g^{-1} \cdot x.
  \]
  Using~(\ref{item-FV}), we obtain
  that
  \[ S \cdot x = \Gamma \cdot x
  = \bigcup_{f \in F} f \cdot V^{-1} \cdot x
  \subset \bigcup_{f,g \in F} f \cdot R \cdot g^{-1}(x)
  \subset S \cdot x.
  \]
  Because translation by~$f \in F$ lies in~$[S]$, this shows that $R \subset S$
  is a translation finite extension.
\end{proof}

%%%%%%%%%%%%%%%%%%%%
\subsection{Putting it all together}\label{subsec:final}

We continue to use the setup from Section~\ref{subsec:setup}. 
Because $R \subset S$ is a translation finite extension (Lemma~\ref{lem:geom}),
we obtain
\[ \cost_\mu S \leq \cost_\mu R + 1
\]
from Lemma~\ref{lem:transfin}. In combination with Lemma~\ref{lem:costnorm},
it follows that
\begin{align*}
  \cost_\mu \alpha
  & = \cost_\mu S \leq \cost_\mu R + 1
  \\
  & \leq \sum_{j=1}^m \mu(A_j) + 1\leq |c|_1 + 1.
\end{align*}
Taking the infimum over all $\alpha$-parametrised fundamental cycles~$c$ of~$M$
thus shows the desired estimate
\[ \cost_\mu \alpha - 1 \leq \ifsvp M \alpha.
\]

Because integral foliated simplicial volume can be computed in terms
of ergodic essentially free parameter
spaces~\cite[Proposition~4.17]{loehpagliantini}, taking the infimum
over all ergodic essentially free standard $\Gamma$-spaces~$\alpha$
implies that
\[ \cost \Gamma -1 \leq \ifsv M.
\]
This completes the proof of Theorem~\ref{mainthm}.

%%%%%%%%%%%%%%%%%%%%%
\subsection{The weightless version}\label{subsec:weightless}

The proof of the cost estimate of Theorem~\ref{mainthm} does not
incorporate the values of the coefficient functions. Therefore, the
estimate can be improved in a straightforward way to the case of
weightless parametrised simplicial volumes
(Theorem~\ref{mainthmwl}). The advantage of these weightless versions
is that they also allow for coefficients in finite fields and other
commutative rings with unit~\cite{loehfp}.

We quickly review the definition of weightless parametrised simplicial
volumes and indicate how to prove the theorem in this case. Let $M$
be an oriented closed connected $n$-manifold with fundamental group~$\Gamma$,
let $\alpha = \Gamma \actson (X,\mu)$ be a standard $\Gamma$-space, and
let $Z$ be a commutative ring with unit.
We then write
$L^\infty(X,Z) := Z \otimes_\Z  \linfz X
$ 
and
\[ C_*(M;\alpha;Z) := L^\infty(X,Z) \otimes_{\Z \Gamma} C_*(\widetilde M;\Z).
\]
A cycle~$c \in C_*(M;\alpha;Z)$ is an $(\alpha;Z)$-fundamental cycle of~$M$
if $c$ is homologous to a $Z$-fundamental cycle of~$M$. Moreover, we define
the \emph{weightless norm} of a chain~$c = \sum_{j =1}^m f_j \otimes \sigma_j \in C_n(M;\alpha;Z)$
in reduced form by
\[ |c|_{(\alpha;Z)} := \sum_{j=1}^m \mu(\supp f_j) \in\R_{\geq 0}.
\]
The \emph{weightless parametrised $Z$-simplicial volume} of~$M$ is given by
\begin{align*}
  \suv M {\alpha;Z} := \inf \bigl\{ |c|_{(\alpha;Z)} \bigm|
  \;& \text{$c \in C_n(M;\alpha;Z)$ is an $(\alpha;Z)$-fundamental}
  \\
  & \text{cycle of~$M$}\bigr\}.
\end{align*}

\begin{thm}\label{mainthmwl}
  Let $M$ be an oriented closed connected manifold with fundamental
  group~$\Gamma$, let $\alpha = \Gamma \actson (X,\mu)$ be an
  essentially free ergodic standard $\Gamma$-space, and let $Z$ be a
  commutative ring with unit. Then
  \[ \cost_\mu \alpha -1 \leq \suv M {\alpha;Z}.
  \]
\end{thm}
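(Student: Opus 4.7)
The plan is to follow the proof of Theorem~\ref{mainthm} from Sections~\ref{subsec:finpi1}--\ref{subsec:final} essentially verbatim, with $\Z$-coefficients systematically replaced by $Z$-coefficients. The finite fundamental group case is handled exactly as in Section~\ref{subsec:finpi1}, since only non-negativity of $\suv M {\alpha;Z}$ is needed there. So I assume that $\Gamma$ is infinite and let $S \subset X \times X$ denote the orbit relation of~$\alpha$.

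Given an $(\alpha;Z)$-fundamental cycle $c = \sum_{j=1}^m f_j \otimes \sigma_j \in C_n(M;\alpha;Z)$ in reduced form, I would mirror the setup of Section~\ref{subsec:setup}: choose a relatively compact fundamental domain $D \subset \widetilde M$, set $A_j := \supp f_j$, pick $\gamma_j \in \Gamma$ with $\sigma_j(v_1) \in \gamma_j \cdot D$, define $\varphi_j \in \llbracket S\rrbracket$ by $x \mapsto \gamma_j^{-1} \cdot x$ on $A_j$, and put $R := \langle \varphi_1,\dots,\varphi_m\rangle_X$. The analogue of Lemma~\ref{lem:costnorm} is in fact cleaner in the weightless setting: since $\Phi := (\varphi_j)_j$ graphs $R \subset S$,
\[ \cost_\mu R \leq \cost_\mu \Phi = \sum_{j=1}^m \mu(\dom \varphi_j) = \sum_{j=1}^m \mu(A_j) = |c|_{(\alpha;Z)},
\]
with no appeal to integer-valuedness of the coefficient functions.

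The core step is to re-establish that $R \subset S$ is a translation finite extension. I would pass to the locally finite evaluation
\[ c_x := \sum_{j=1}^m \sum_{\gamma \in \Gamma} f_j(\gamma^{-1} \cdot x) \cdot \gamma \cdot \sigma_j \in \Clf n {\widetilde M} Z,
\]
which is well defined by proper discontinuity of the deck action (independently of the coefficient ring), and which for $\mu$-almost every~$x$ represents the locally finite $Z$-fundamental class of~$\widetilde M$; this latter fact is obtained by tensoring the $\Z$-valued argument of~\cite[Lemma~2.5]{FLPS} with~$Z$ over~$\Z$. Lemma~\ref{lem:lfres} is proved over an arbitrary commutative ring with unit, so the restriction $\varrho_{x_0}[c_x] = [\widetilde M, \widetilde M\setminus\{x_0\}]_Z$ is non-zero in $H_n(\widetilde M, \widetilde M\setminus\{x_0\};Z) \cong Z$. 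The combinatorial and geometric content of Lemma~\ref{lem:geom}---construction of the graph~$G_x$, the four face-incidence cases identifying edges with $R$-equivalences, the decomposition of~$c_x$ into component cycles, the selection of a connected component whose associated cycle has non-zero local class at a chosen $x_0 \in D$, and the equality $F \cdot V^{-1} = \Gamma$---is entirely ring-independent and transfers verbatim.

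Concluding as in Section~\ref{subsec:final}, Lemma~\ref{lem:transfin} yields
\[ \cost_\mu \alpha = \cost_\mu S \leq \cost_\mu R + 1 \leq |c|_{(\alpha;Z)} + 1,
\]
and taking the infimum over all $(\alpha;Z)$-fundamental cycles of~$M$ gives the claim. The only real obstacle is a bookkeeping one: verifying once that evaluation descends to a chain map $C_*(M;\alpha;Z) \longrightarrow \Clf * {\widetilde M} Z$ sending $(\alpha;Z)$-fundamental cycles to locally finite $Z$-fundamental cycles. This, however, reduces formally to the $\Z$-coefficient statement from~\cite[Lemma~2.5]{FLPS} by tensoring with~$Z$ over~$\Z$ and using that $L^\infty(X,Z) = Z \otimes_\Z \linfz X$.
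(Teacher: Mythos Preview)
Your proposal is correct and follows essentially the same route as the paper's own proof, which likewise reduces the weightless statement to a step-by-step rerun of the proof of Theorem~\ref{mainthm} with $Z$-coefficients, invoking the $Z$-version of the locally finite evaluation (via \cite[Lemma~2.5]{FLPS}), Lemma~\ref{lem:lfres} over~$Z$, and the ring-independent combinatorics of Lemma~\ref{lem:geom}. Your observation that the cost--norm estimate becomes an equality $\sum_j \mu(A_j) = |c|_{(\alpha;Z)}$ without appeal to integer-valuedness is a nice clarification the paper leaves implicit.
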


It should be noted that as coefficient ring~$Z$ in
Theorem~\ref{mainthmwl} we can also take, e.g., finite fields. Therefore,
we obtain an upper bound of cost in terms of objects in positive characteristic. 

\begin{proof}
  We can prove this version in the same way as the $\ell^1$-version
  in Theorem~\ref{mainthm}. We will therefore only indicate the basic steps: 
  \begin{itemize}
  \item As in the $\ell^1$-case, we can assume without loss of generality that $\Gamma$
    is infinite. Let $S$ be the orbit relation of~$\alpha$.
  \item Let $c = \sum_{j=1}^m f_j \otimes \sigma_j
    \in L^\infty(X,Z) \otimes_{\Z \Gamma} C_n(\widetilde M;\Z)$ be
    an $(\alpha,Z)$-fun\-da\-men\-tal cycle of~$M$ in reduced form.
  \item
    Literally in the same way as in the $\ell^1$-case, we define the relation~$R$
    on~$X$ associated with~$c$.
  \item
    Then Lemma~\ref{lem:costnorm} shows that $R$ is a subrelation of~$S$ and
    \[ \cost_\mu R \leq |c|_{(\alpha;Z)}.
    \]
  \item For $\mu$-almost every~$x \in X$, the chain
    \[ c_x = \sum_{j=1}^m \sum_{\gamma \in \Gamma} f_j(\gamma^{-1} \cdot x) \cdot \gamma \cdot \sigma_j
    \]
    is a well-defined locally finite $Z$-fundamental cycle in~$\Clf n
    {\widetilde M} Z$ of~$\widetilde M$ (the proof of the
    $\Z$-case~\cite[Lemma~2.5]{FLPS} also works for $Z$-coefficients).
  \item Using Lemma~\ref{lem:lfres} and the arguments of the proof of
    Lemma~\ref{lem:geom}, we obtain that $R \subset S$ is a
    translation finite extension.
  \item As in Section~\ref{subsec:final}, we thus obtain~$\cost_\mu
    \alpha -1 \leq |c|_{(\alpha;Z)} \leq \suv M {\alpha;Z}$, as claimed.
    \qedhere
  \end{itemize}
\end{proof}

%%%%%%%%%%%%%%%%%%%%%%%%%%%%%%%%%%%%%%%%%%%%%%%%%%%%%%%%%%%%%%%%%%

\medskip
\vfill

\noindent
\emph{Clara L\"oh}\\[.5em]
  {\small
  \begin{tabular}{@{\qquad}l}
    Fakult\"at f\"ur Mathematik,
    Universit\"at Regensburg,
    93040 Regensburg\\
    %Germany\\
    \textsf{clara.loeh@mathematik.uni-r.de},\\
    \textsf{http://www.mathematik.uni-r.de/loeh}
  \end{tabular}}

\end{document}